\documentclass[a4paper,11pt]{amsart}
\usepackage{epsfig,latexsym,amsfonts,
amsmath,amssymb,dsfont,color}
\usepackage{graphicx}

%\usepackage{refcheck}
%%%%%%%%%%%%%%%%%%%%%%%%%%%%%%%%%%%%%
\numberwithin{equation}{section}
%\numberwithin{figure}{subsection}
%\numberwithin{table}{section}
\newtheorem{Theorem}{Theorem}[section]

\def\comment#1{
\,
\color{blue}
#1
\color{black}
\newline
}
\def\comment#1{}
\def\Hu0{{\mathcal H}^{1}\!+\!u_0}

\def\Rd{{\mathbb R}^d}
\def\bfn{{\bf n}}
\def\bfe{{\bf e}}
\def\bfv{{\bf v}}

\def\bfw{{\bf w}}

\def\bfb{{\bf b }}

\def\bfx{{\bf x}}
\def\bfN{{\bf N}}
\def\bfE{{\bf E}}
\def\bfT{{\bf T}}

\def\bfG{{\bf G}}

\def\bftau{{\boldsymbol\tau}}
\def\bfsigma{{\boldsymbol\sigma}}

\def\bfeta{{\boldsymbol\eta}}
\def\bfzeta{{\boldsymbol\zeta}}
\def\bfchi{{\boldsymbol\chi}}

\def\ed{\end{document}}

\def\cD{{\mathcal D}}

\def\INT{{\mathds I}}

%%%%%%%%%%%%%%%%%%%%%%%%%%%%%%
\def\Frame#1{
\begin{tabular}{|c|} \hline
   \\
$\
%%%
#1
%%%
$
\\
\\ \hline
\end{tabular}
}
%%%%%%%%%%%%%%%%%%%%%%%%%%%%%%
\def\dvg{\rm div}
\def\IntO{\int\limits_{\Omega}}

\def\Rd{\mathbb R^d}

 \pagestyle{plain}

\newcommand\be{\begin{eqnarray*}}
\newcommand\ee{\end{eqnarray*}}
\newcommand\ben{\begin{eqnarray}}
\newcommand\een{\end{eqnarray}}
%%%%%%%%%%%%%%%%%%%%%%%%%%%%%%%%%%%
\def\Frame#1{
\begin{tabular}{|c|} \hline
   \\
$\
%%%
#1
%%%
$
\\
\\ \hline
\end{tabular}
%\eqno(4.7)
}
%%%%%%%%%%%%%%%%%%%%%%%%%%%%%%%%

%\def\dvg{{\rm div}}

 \def\Mod#1{\left\vert #1 \right\vert}

    % ||...||

\def\Nor2#1{\langle\!\langle #1 \rangle\!\rangle}
\def\mean#1{\left\{\!\!\left|\,#1\,\right|\!\!\right\}}

\def\wh{\widehat}
\def\wt{\widetilde}

\def\wh{\widehat}

\def\cD{\mathcal D}

\def\IntO {\int\limits_\Omega}

\def\cT{{\mathcal T}}
\def\cD{{\mathcal D}}

 \def\Mod#1{\left\vert #1 \right\vert}
 \def\wt{\widetilde}

\def\dvg{{\rm div}}

\def\IntO{\int\limits_\Omega}

\begin{document}
%------------------- Title ------------------------
\title
{  Poinca\'e type inequalities
for  vector functions with zero mean normal traces on the boundary 
and applications to interpolation methods
}
\author{ S. Repin}
 \thanks{University of Jyv\"askyl\"a, Finland 
 and St. Petersburg Department of Steklov Institute of Mathematics of Russian Academy of Sciences }
\email{serepin@jyu.fi,\;repin@pdmi.ras.ru}
\dedicatory { Dedicated to Professor Yuri Kuznetsov on the occasion
     of his $70$th birthday}
\subjclass{Primary 65N30}

\maketitle

\begin{abstract}\,
In the paper, we consider  inequalities of the Poincar\'e--Steklov type for subspaces of $H^1$-functions defined in a bounded
domain $\Omega\in \Rd$ with Lipschitz boundary
$\partial\Omega$. For scalar valued functions, the subspaces
are defined by zero mean condition on $\partial\Omega$ or on
a part of $\partial\Omega$ having positive $d-1$ measure. For vector valued functions,
zero mean conditions are imposed on components (e.g., normal components) of the function on certain  $d-1$ dimensional manifolds (e.g., on plane or curvilinear faces of $\partial\Omega$).
 We find explicit and simply
computable bounds of the respective constants for domains typically used
in finite element methods (triangles, quadrilaterals,
tetrahedrons, prisms, pyramids, and domains composed of them).
The second part of the paper discusses applications of the estimates
to interpolation of scalar and vector valued functions.
%383838
\end{abstract}

\keywords{Key words: Poincar\'e type inequalities, interpolation of functions,  estimates of constants in functional inequalities}

%%%%%%%%%%%%%%%%%%%%%%%%
\section{Introduction}
\subsection{Classical Poincar\'e inequality}

{\sc H. Poincar\'e} \cite{Poincare1984} proved that
$L^2$ norms of functions with zero mean defined in
a bounded domain $\Omega$ with smooth boundary
$\partial\Omega$ are uniformly bounded by the $L^2$ norm of the gradient, i.e.,
\begin{eqnarray}
&&\label{1.1}
\|w\|_{2,\Omega}\leq C_{\rm P}(\Omega)
\|\nabla w\|_{2,\Omega},\qquad
\forall w\in \widetilde H^1(\Omega),
\end{eqnarray}
where
$$
\widetilde H^1(\Omega):=\left\{w\in H^1(\Omega)\,\mid\,\mean{w}_\Omega:=\frac{1}{|\Omega|}
\IntO w\,dx=0\right\}.
$$
Poincar\'e  also deduced the very first estimates of $C_{\rm P}$:
\ben
\label{1.2}
&C_{\rm P}(\Omega)\leq\frac34{\rm d}_\Omega,\quad {\rm d}_\Omega:={\rm diam}\, \Omega\quad &{\rm for}\; d=3\;\\
\label{1.3}
&\;C_{\rm P}(\Omega)\leq\,
\sqrt{\frac{7}{24}}{\rm d}_\Omega\approx 0.5401{\rm d}_\Omega\quad &{\rm for}\; d=2.
\een
For piecewise smooth domains the inequality (\ref{1.1}) (and a similar inequality for functions vanishing on the boundary) was independently established by {\sc V. Steklov} \cite{Steklov}, who proved
 that 
$C_{\rm P}=\lambda^{-\frac 12}$, where $\lambda$ is the smallest positive eigenvalue of the problem
\ben
\label{1.4}
&-\Delta u={\lambda} u&\mbox{in}\quad \Omega;\\
\label{1.5}
&\partial_{\bfn}u=0&\mbox{on}\quad \partial\Omega.
\een

Easily computable estimates of $C_{\rm P}(\Omega)$ are
known for {\em convex domains} in $\Rd$. An upper bound
\ben
\label{1.6}
&&\;C_{\rm P}(\Omega)\,\leq \,\frac{{\rm d}_\Omega}{\pi}\;\approx 0.3183\, {\rm d}_\Omega
\een
was established in {\sc L. E. Payne } and {\sc H. F.  Weinberger} \cite{PW} (notice that for  $d=2$ the upper bound (\ref{1.3})
found by Poincar\,e is not  far from the sharp estimate (\ref{1.6})).

A lower bound of $C_{\rm P}(\Omega)$
was derived
in 
 {\sc S. Y. Cheng} \cite{Cheng} (for $d=2$):
 \ben
 \label{1.7}
 C_{\rm P}(\Omega)\geq\,\frac{{\rm d}_\Omega}{2j_{0,1}}\;\approx
 0.2079\,{\rm d}_\Omega.
 \een
Here $j_{0,1} \approx 2.4048$ is the smallest
positive root of the Bessel function $J_0$.
%We can compare these two sided bounds with the exact
%values for
%a rectangle $\Pi=(0,a)\times(0,b)$, $a\leq b$ the respective eigenfunction
%is $\cos\frac{\pi x_2}{b}$ and
%$\lambda=\pi^2\frac{1}{b^2}$, 
%$C_{\rm P}(\Omega)=\frac{b}{\pi}=
%\frac{{\rm d}_\Omega}{\pi}\frac{b}{\sqrt{a^2+b^2}}=
%\frac{{\rm d}_\Omega}{\pi}\left(1+\frac{a^2}{b^2}\right)^{-1/2}$. Hence, for rectangles in 2D
%\be
%0.2251{\rm d}_\Omega\,\approx\,
%\frac{{\rm d}_\Omega}{\pi\sqrt{2}}\,\leq C_{\rm P}(\Omega)\,\leq
%\frac{{\rm d}_\Omega}{\pi}. 
%\ee
%
%\begin{Remark}
%For a qube $\Pi=(0,a)^3$ in 3D $C_{\rm P}(\Omega)=\frac{a}{\pi}=
%\frac{{\rm d}_\Omega}{\pi\sqrt{3}}\approx 0.1838 {\rm d}_\Omega$. Therefore, for $d>2$ the estimate (\ref{1.7})
%is not valid.
%\end{Remark}

For {\em isosceles triangles} an improvement
of the {upper bound} (\ref{1.6}) is presented in
{\sc R. S. Laugesen} and {\sc B. A. Siudeja} \cite{LaSi}
\ben
\label{1.8}
&&C_{\rm P}(\Omega)\leq\,
    \frac{d_\Omega}{j_{1,1}},  
    \een
%\ben
%\label{1.8}
%&&C_{\rm P}(\Omega)\leq\,d_\Omega\cdot
%\left\{\!\!\!
%\begin{array}{cc}
%    \frac{1}{j_{1,1}}  &   \alpha\leq \,\frac{\pi}{3} \\
%  \min\{\frac{1}{j_{1,1}},
%   \frac{1}{j_{0,1}}(2(\pi-\alpha)\tan(\alpha/2))^{-1/2}   \}
%    & \alpha\in (\frac{\pi}{3},\frac{\pi}{2}], \\
%   \frac{1}{j_{0,1}}(2(\pi-\alpha)\tan(\alpha/2))^{-1/2}   & \alpha\in (\frac{\pi}{2},\pi),
%\end{array},
%\right.
%\een
where $j_{1,1} \approx3.8317$ is the smallest positive root of the Bessel
function  $J_1$.% and $\alpha$ is the aperture angle. 

%For $\alpha=\frac{\pi}{2}$, 
%$C_{\rm P}(\Omega)\leq 0.2346 d_\Omega$. 
%\be
%\min\{\frac{1}{j_{1,1}},
 %  \frac{1}{j_{0,1}}(\pi\cdot 1)^{-1/2}\}
  % =\min\{\frac{1}{j_{1,1}},
   %\frac{1}{\sqrt{\pi}j_{0,1}}\}=\min\{0.2610,\frac{1}{4.2624}\}=0.2346
%\ee

Poincar\'e type inequalities also hold for $L^q$ norms
if $1\leq q<+\infty$. In
{\sc G. Acosta and R. Duran} (2003), it was
shown that for convex domains the constant
in $L_1$ Poincar\'e type inequality satisfies the estimate
\ben
\label{1.9}
\inf_{c\in {\mathbb R}}\|w-c\|_{L_1}\leq \frac{{\rm d}_\Omega}{2}\|\nabla w\|_{L_1}.
\een
Estimates of the constant for other $q$ can be found in
{\sc S.-K. Shua} and {\sc R. L. Wheeden} (2006) (also 
for convex domains).
%
%$L_p$ estimate for vector fields (vector analogs of Poincar\'e inequality)
%\be
%\inf_{\eta\in {\mathbb R\mathbb D}}\|w-\eta\|_{L_p}\leq
%c\|\varepsilon(w)\|_{L_p}.
%\ee
%Case $p=1$: \bc{M. Fuchs and S. Repin} (2011).
%%%%%%%%%%%%%%%%%%%%%%%%%%%%%%%%%%%%%%%%
\subsection{Poincar\'e type inequalities for functions
with zero mean boundary traces}
Inequalities similar to (\ref{1.1}) also hold for functions
with zero mean traces on the boundary (or
on a measurable part
$\Gamma\subset\partial\Omega$) such that
${\rm meas}_{(d-1)}\Gamma>0$.
For any
\begin{eqnarray*}
w\,\in\,\wt H^1_\Gamma(\Omega)=\Bigl\{w\in H^1(\Omega)\,\Bigl\vert\Bigr.\,\mean{w}_{\Gamma} :=\frac{1}{|\Gamma|}\int\limits_\Gamma w\,ds
=0\Bigr\},
\end{eqnarray*}
we have two estimates for the $L^2(\Omega)$ norm of $w$
\ben
\label{1.10}
\|w\|_{2,\Omega}\leq \;C_{\Gamma}(\Omega)\|\nabla w\|_{2,\Omega}
\een
and and for its trace on the $\Gamma$
\ben
\label{1.11}
\|w\|_{2,\Gamma}\leq\;
 C^{\rm Tr}_\Gamma(\Omega)\|\nabla w\|_{2,\Omega}.
\een
 Existence of positive constants $C_{\Gamma}(\Omega)$ and $C^{\rm Tr}_\Gamma(\Omega)$
is proved by standard compactness arguments. 
Inequality (\ref{1.10}) arises in analysis of certain physical phenomena
(the so called "sloshing"  frequencies, see
{\sc D. W. Fox} and {\sc J. R. Kuttler} \cite{Slosh1},  {\sc V. Kozlov} et al. \cite{Slosh2,Slosh3} and references therein).  In the paper  by I. {\sc Babuska} and {\sc A. K. Aziz} \cite{BabuskaAziz} it was used in proving sufficiency of the maximal
angle condition for finite element meshes with triangular elements.
Inequalities (\ref{1.10}) and (\ref{1.11}) can be useful in many other
cases, e.g., for nonconforming approximations, a posteriori
error estimates (see \cite{MatNeiRep,ReENUMATH,MaNeRe,ReGruyter}), and and advanced interpolation methods for scalar
and vector valued functions.
In this paper, we are mainly interested in the  inequality (\ref{1.10}) for functions
with zero mean on $\Gamma$.
For the sake of brevity, we will call 
it the {\em boundary Poincar\'e inequality}. 

Exact constants $C_\Gamma$ and $C^{\rm Tr}_\Gamma$ are known
only for a restricted number of "simple" domains.
Table 1 summarises some of the results presented in {\sc A. Nazarov and  S. Repin} \cite{NazRep}, which are related to such domains as
rectangle $\Pi_{h1\times h_2}:=(0,h_1)\times (0,h_2)$,
parallelepiped 
$\Pi_{h1\times h_2\times h_3}:=(0,h_1)\times (0,h_2)\times (0,h_3)$, and right triangle $\overline T_h:={\rm conv}\{(0,0),(h,0), (0,h)\}$.
\begin{table}[h]
\label{tab:exactconst}
\begin{tabular}{|c|c|c|c|}
 \hline
 &&&\\
$d$ &  $\Omega$ &        $ \Gamma$      &      $ C_\Gamma(\Omega)$\\
 \hline
 $2$ & $\Pi_{h1\times h_2}$           &  {\rm face}\,$x_1=0$
   &  $ c_1 \max\{2h_1;h_2\}$,\quad $c_1=1/\pi$ \\
    \hline
 $2$ & $\Pi_{h1\times h_2}$           &  $\partial\Omega$
   &  $ c_1\max\{h_1;h_2\}$ \\
    \hline
 $3$ & $\Pi_{h1\times h_2\times h_3}$
      &   {\rm face}\,$x_1=0$
         &   $ c_1\max\{2h_1;h_2;h_3\}$               \\
          \hline
$2$ &$T_h$          &    {\rm leg} &     
$ c_2h$, \quad $c_2=1/\zeta$,\;                       
 $\scriptstyle\zeta\approx 2.02876 $ \\
 \hline
  $2$&$T_h$
  & {\rm two\,legs}   &
  $ c_1h $           \\
   \hline
  $2$ &$T_h$ &  {\rm hypothenuse}
   & $ \sqrt{2}c_2h $ \\
 \hline
\end{tabular}
\vskip3pt
\caption{Sharp constants}
\label{tab}
\end{table}

In Section 2 we deduce
easily computable majorants of $C_\Gamma$ for {\em triangles, rectangles, tetrahedrons, polyhedrons, pyramides and prizmatic type domains}. These results yield interpolation estimates (and respective
constants) for interpolation of scalar valued functions on macrocells based
on mean values on faces. As a result, we can deduce interpolation estimates
for functions defined on meshes with very complicated (e.g., nonconvex) cells.

Section 3 is concerned with boundary Poincar\'e inequalities
for vector valued functions. 
Certainly, (\ref{1.10}) admits a straightforward extension
to vector fields. We consider more sophisticated forms where
zero mean conditions are
imposed on mean values of different components of a vector 
valued function $\bfv$ on different $d-1$ dimensional manifolds (which are assumed to be sufficiently regular). In particular, it suffices to impose zero
mean  conditions on normal components of $\bfv$ on $d$
Lipschitz manifolds (e.g., on $d$ faces lying on $\partial\Omega$).  Then,
\ben
\label{1.12}
\|\bfv\|_\Omega\,\leq\, {\mathds C}(\Omega,\Gamma_1,...,\Gamma_d)\|\nabla \bfv\|_\Omega.
\een
Theorem \ref{InqVector} proves (\ref{1.12}) by compactness
arguments. 
After that, we consider the case where the conditions
are imposed on normal components of a vector field on $d$ different
faces of polygonal domains in $\Rd$ and
deduce (\ref{1.12}) directly by
applying (\ref{1.10}) to normal components of the vector field. This method also
yields easily computable majorants of the constant $\mathds C$.
 
 The last part of the paper is devoted to interpolation of functions
 defined in a bounded Lipschitz domain   $\Omega\in \Rd$,
 which are based on mean values of the function (or of mean values
 of normal components) on some set $\Gamma\in {\mathbb R}^{d-1}$.  
 It should be noted that interpolation methods 
based on normal components of vector fields defined on edges
of finite elements
are widely used in numerical analysis of PDEs (see, e.g., \cite{BrFo,RoTo}). 
Raviart--Thomas (RT) type interpolation operators
and their properties for approximations on polyhedral meshes
has been deeply studied in papers
of {\sc D. Arnold, D. Boffi and R. Falk} \cite{ArBoFa2002,ArBoFa2005}, {\sc A. Bermudes} et. all
\cite{Bermudes} and  other publications. The respective interpolants
belong to the space $H(\Omega,\dvg)$.
Approximations of this type are often used in
 mixed and hybrid finite element methods (see, e.g., {\sc F. Brezzi} and {\sc M. Fortin} \cite{BrFo}, {\sc J. E. Roberts} and {\sc J.-M. Thomas}
 \cite{RoTo}, {\sc V.~Girault} and {\sc P.~A.~Raviart}
 \cite{GiRa}).

This paper is concerned with coarser  interpolation methods, which
provide only $L^2$ approximation
of fluxes (and $H^{-1}$ approximation for the divergence
what is sufficient for treating balance equations in a weak sense!).
Hopefully this type interpolation methods  could be useful
for numerical analysis of PDEs on highly distorted meshes. This
challenging problem has been studying for many years by {\sc Yu. Kuznetsov} and coauthors
(see \cite{KuzPro2010,KuzRep2003,Kuz2006,Kuz2011,Kuz2014,
Kuz2015} and other publications cited therein).   
Smooth (high order)  methods 
 are probably too difficult for the interpolation of vector valued functions on very irregular (distorted) meshes.
 Moreover, in the majority of cases smooth interpolants seem to be
 not really natural because exact solutions often have 
 a very restricted regularity
 and because efficient numerical procedures (offered, e.g.,
  by the above mentioned dual mixed and
 hybrid methods) operate with low order approximations for fluxes. If meshes are very
irregular, then it is convenient to apply approximations of the lowest
possible  order and respective numerical methods with minimal
regularity requirements. Poincar\'e type estimates for
functions with zero mean conditions on manifolds of the dimension
$d-1$ yield interpolants of exactly this type.  

In Section 4 it is proved
that in $\Omega$ the difference between $u$ and its interpolant  $\INT_\Gamma u$
is controlled by the norm of $\nabla u$ with a constant, which depends
on the maximal diameter 
of the cell (due to results of previous sections, realistic 
estimates the interpolation constants are known
for "typical" cells). Finally, we shortly discuss interpolation on meshes when a (global) domain $\Omega$ is decomposed into a collection of  local
subdomains (cells) $\Omega_i$.  Using cell interpolation operators,
we define the global interpolation operator $\INT_{\cT_h}$ and prove the
respective interpolation estimates for scalar and vector valued functions. The interpolation
method operates with minimal amount of interpolation parameters related to mean values on a certain
amount of faces and preserves mean values on faces (for scalar valued functions) and mean values of normal components (for vector valued functions).

%%%%%%%%%%%%%%%%%%%%%%%%%%%%
\section{ Estimates of $C_\Gamma$ for typical mesh cells}

\subsection{Triangles}
Consider a nondegenerate triangle ABC (Fig. 1 left) where $\Gamma$
coincides with the side AC.
\label{sec:simplicial}
\begin{figure}[h]
\label{fig:triangleabc}
\centerline{\includegraphics[width=2.1in]{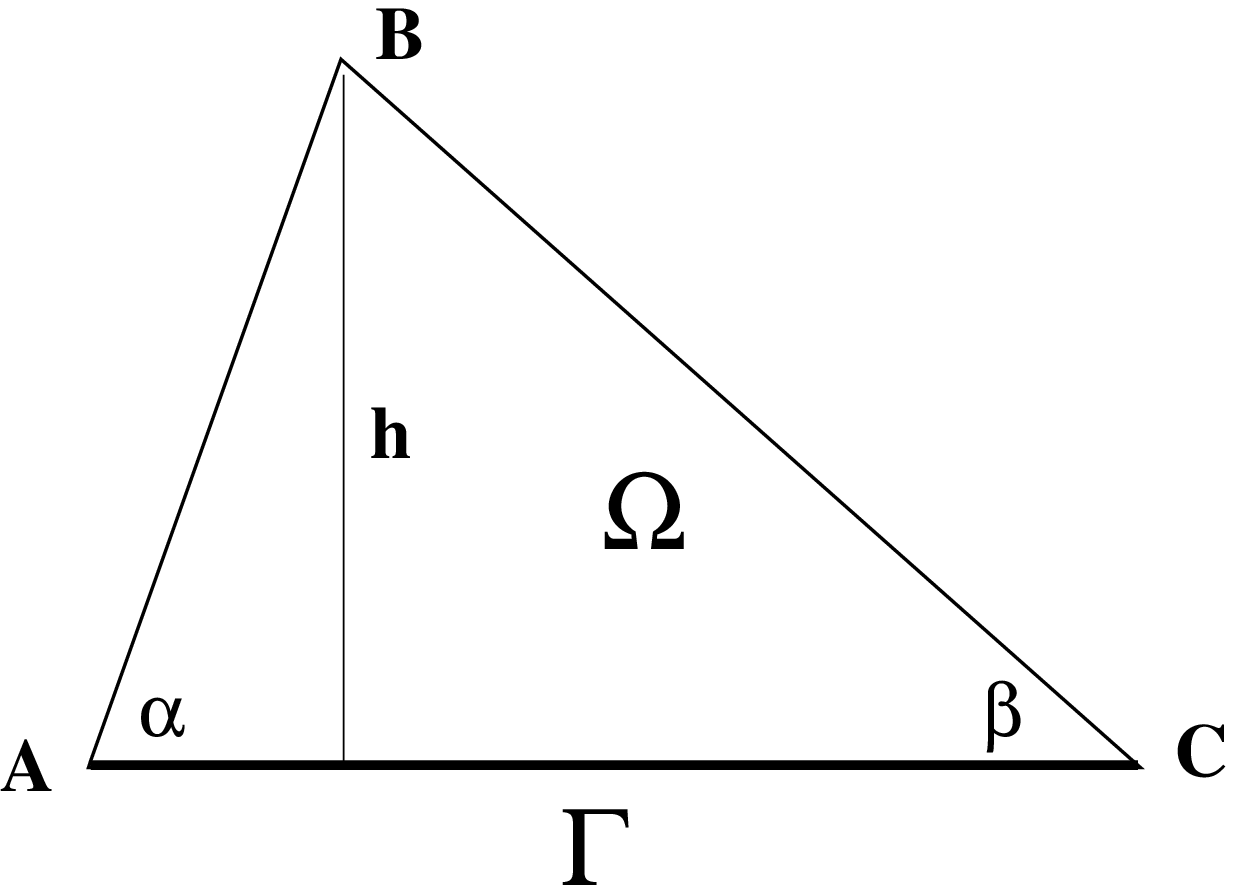}\qquad
\includegraphics[width=2.1in]{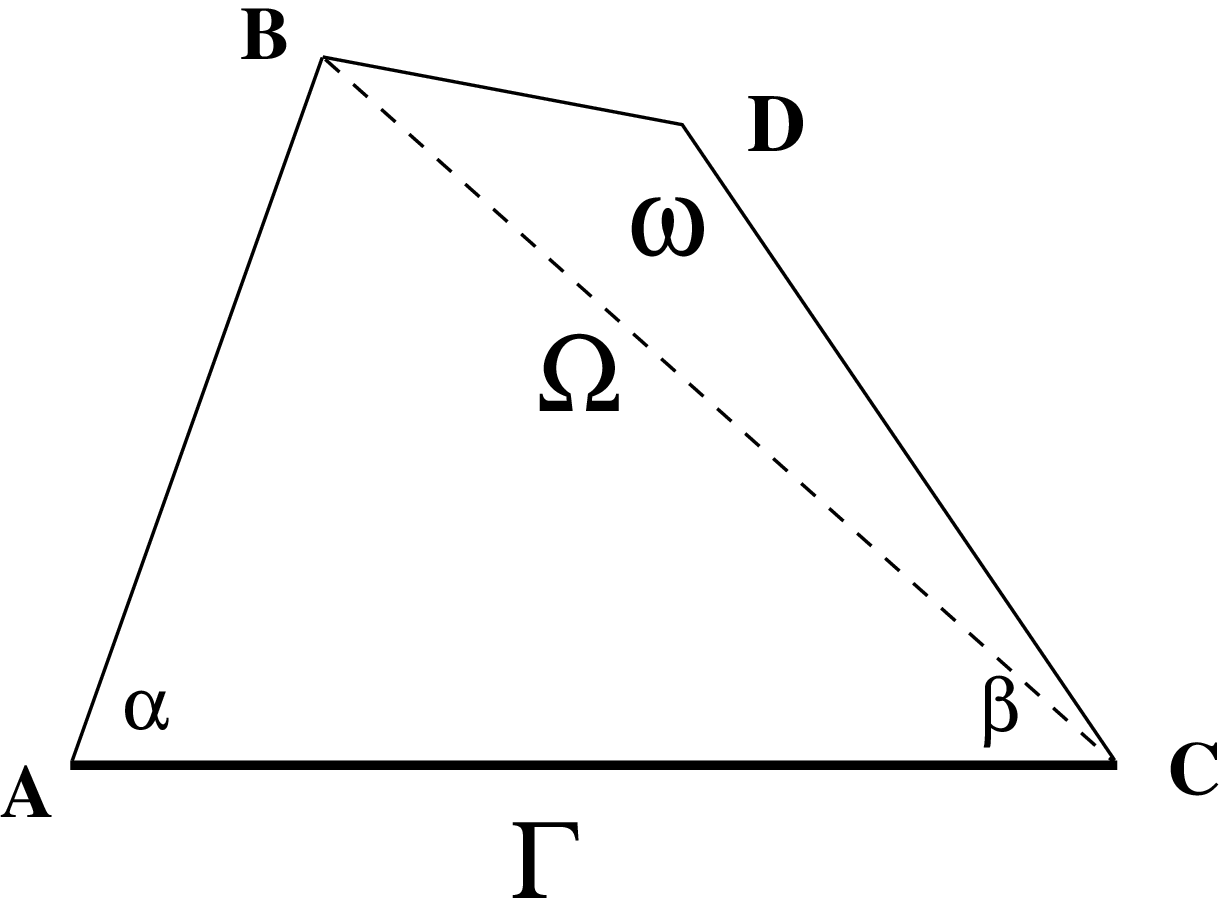}
}
\caption{Triangle and quadrilateral}
\end{figure}
\subsubsection{Majorant of $C_\Gamma$}
Our analysis is based upon the estimate
\ben
\label{simplex1}
C^2_\Gamma\leq\,C^2_{\rm P}+\frac{|\Omega|}{|\Gamma|^2}\inf\limits_{\tau\in Q(\Omega)}\|\bftau\|^2_{2,\Omega},
\een
which is a special form of the upper bound of $C_\Gamma$ derived in {\sc S. Repin}
\cite{ReRJNAMM}. Here
$
 Q(\Omega)$
 is a subset of $H(\Omega,\dvg)$ containing vector functions such that $\dvg\bftau=\frac{|\Gamma|}{|\Omega|}$,
 $\bftau\cdot\bfn=1\;{\rm on}\;\Gamma$, and  $\bftau\cdot\bfn=0\;{\rm on}\;\partial\Omega\setminus\Gamma$.
 We set
$\bftau$ as an affine field with values at the nodes A,B, and C 
$(-\cot\alpha,-1)$, $(0,0)$, and $(\cot\beta,-1)$, respectively.
In this case,
\be
\|\bftau\|^2_{2,\Omega}=
\frac13 |\Omega|(\frac32+\frac14\cot^2\alpha+\frac14\cot^2\beta+
\frac14(cot\beta-\cot\alpha)^2)
=\frac{|\Omega|}{6}\Sigma_{\alpha\beta}.
\ee
where 
$$
\Sigma_{\alpha\beta}=\cot^2\alpha+\cot^2\beta-\cot\alpha\cot\beta+3.
$$
Since $|\Omega|=\frac12 h|\Gamma|$, we see that
$\frac{|\Omega|^2}{|\Gamma|^2}=\frac{h^2}{4}$. In view
of (\ref{1.6}), the constant $C_{\rm P}$ is bounded from above by 
$\frac{d_\Omega}{\pi}$, where 
$d_\Omega=\max\{|AB|,|BC|,|CD|\}$, and we deduce an easily computable
bound
\ben
\label{simplex2}
C^2_\Gamma\leq\,C^2_{\rm P}+\frac{h^2\Sigma_{\alpha\beta}}{24}\,\leq\,\frac{{\rm d}^2_\Omega}{\pi^2}+\frac{h^2\Sigma_{\alpha\beta}}{24}.
\een
%%%%%%%%%%%%%
\comment{
If $\alpha,\beta\rightarrow \pi/2$, then $\Sigma_{\alpha\beta}\rightarrow\, 3$
and the upper bound tends to $h^2\left(\frac{1}{\pi^2}+\frac18\right)=h^2 0.226$.
Exact value tends to 
$h^2 0.173$.
}
%%%%%%%%%%%%%%

We can represent $\Sigma_{\alpha\beta}$ in a somewhat different form
\be
\Sigma_{\alpha\beta}\,=\,
\frac{|AB|^2+|BC|^2+
\stackrel{\rightarrow}{AB}\cdot \stackrel{\rightarrow}{BC}}{h^2},
\ee
%%%%%%%%%%%%
\comment{
\be
\Sigma_{\alpha\beta}\,=\,
\frac{1}{\sin^2\alpha}+\frac{1}{\sin^2\beta}+\frac
{\sin\alpha\sin\beta-\cos\alpha\cos\beta}{\sin\alpha\sin\beta}\\
=\frac{|AB|^2}{h^2}+\frac{|BC|^2}{h^2}-\frac
{|AB||BC|\cos(\alpha+\beta)}{h^2}\\
\color{blue}=\frac{1}{h^2}\left(|AB|^2+|BC|^2+|AB||BC|\cos(\pi-\alpha-\beta)\right)=\\
\frac{|AB|^2+|BC|^2+
\stackrel{\rightarrow}{BA}\cdot \stackrel{\rightarrow}{BC}}{h^2},
\ee
}
%%%%%%%%%%%%%
which yields the estimate
\ben
\label{simplex3}
C^2_\Gamma\leq\,\frac{{\rm d}^2_\Omega}{\pi^2}+\frac{|AB|^2+|BC|^2+
\stackrel{\rightarrow}{BA}\cdot \stackrel{\rightarrow}{BC}}{24}.
\een
\paragraph{ Example.} If $\alpha=\frac{\pi}{2}$, then ${\rm d}^2_\Omega=h^2+|\Gamma|^2$, $|\Gamma|=h\cot\beta$,\\ ${\rm d}^2_\Omega=h^2(1+\cot^2\beta)$ and
we obtain
\be
C_\Gamma\leq\,
%h^2\left(\frac{1+\cot^2\beta}{\pi^2}+\frac{1}{12}\left(\frac14\cot^2\beta
%+1\right)\right)=
h\sqrt{\frac{1}{\pi^2}+\frac{1}{8}+\cot^2\beta\left(\frac{1}{\pi^2}+\frac{1}{24}\right)}\approx 0.4757\,h\sqrt{1+0.6354\cot^2\beta}
%h\sqrt{0.2263+0.1438\cot^2\beta)
\ee
In particular, for $\beta=\frac{\pi}{4}$, 
we obtain $C_{\Gamma}\leq 0.6083 h$ (exact constant for the riht triangle is $0.4929h$).

\subsubsection{Minorant of $C_\Gamma$}
A lower bound for $C_{\Gamma}$ follows from (\ref{1.7}) and
Irelations between $C_{\rm P}(\Omega)$ and $C_\Gamma(\Omega)$.
Any function in $\wt H^1_\Gamma(\Omega)$ can be represented
as $w-\mean{w}_\Gamma$, where $w\in H^1(\Omega)$. Hence, 
\be
(C_\Gamma(\Omega))^{-2}=
\inf\limits_{w\in H^1(\Omega)}
\frac{\IntO |\nabla w|^2\,dx}{\IntO |w-\mean{w}_\Gamma|^2\,dx}
\ee
and the constant $C_\Gamma(\Omega)$ can be defined as maximum
of $\|w-\mean{w}_\Gamma\|_{2,\Omega}$ for all $w\in H^1(\Omega)$ such that
$\|\nabla w\|_{2,\Omega}=1$.  Analogously, $C_{\rm P}$ can be defined as maximum of $\|w-\mean{w}_\Omega\|_{2,\Omega}$  over the same set of functions. Since
\be
\|w-\mean{w}_{\Gamma}\|_{2,\Omega}\geq\,
\inf\limits_{c\in {\mathbb R}}\|w-c\|_{2,\Omega}=\|w-\mean{w}_\Omega\|_{2,\Omega},
\ee
 we conclude
 that for any selection of $\Gamma$
 \ben
 \label{2.4}
 C_{\rm P}(\Omega)\;\leq\; C_\Gamma(\Omega).
 \een
From (\ref{1.7}) and (\ref{2.4}), it follows that
$
C_\Gamma\,\geq\, \frac12 \frac{{\rm d}_\Omega}{j_{0,1}}.
$
In particular, for
$\alpha=\frac{\pi}{2}$ we have 
$
C_\Gamma\geq\,
0.2079\,h\sqrt{1+\cot^2\beta}. % \;0.0432
$ 
%%%%%%%%%%%%
\subsection{Quadrilaterals}
Using previous results, we deduce an estimate
of $C_\Omega$ for a quadrilateral ABCD (Fig. \ref{fig:triangleabc} right).
On $\Omega_1$ we set the same field $\tau$ as in the previous case
and set $\tau=0$ on $\Omega_2$. Let $\kappa^2=\frac{|\Omega_2|}{|\Omega_1|}$.
%%%%%%%%%%
\comment{
Mean divergence is $\mean{\dvg\tau}_\Omega=\frac{|\Gamma|}{|\Omega|}$.
Notice that
\be
\int\limits_{\Omega_1}\left(\frac{|\Gamma|}{|\Omega_1|}-\frac{|\Gamma|}{|\Omega|}\right)^2\,dx=|\Gamma|^2\frac{|\Omega_2|^2}{|\Omega|^2|\Omega_1|},\quad
\int\limits_{\Omega_2}\frac{|\Gamma|^2}{|\Omega|^2}\,dx=
|\Omega_2|\frac{|\Gamma|^2}{|\Omega|^2}\\
\int\limits_{\Omega_1}+\int\limits_{\Omega_2}=\,\frac{|\Gamma|^2|\Omega_2|}{|\Omega|^2}\left(\frac{|\Omega_2|}{|\Omega_1|}-1\right)=
\,\frac{|\Gamma|^2|\Omega_2|}{|\Omega|\,|\Omega_1|}
=
\,\kappa^2\frac{|\Gamma|^2}{|\Omega|}
\ee
We have
\be
&&C^2_\Gamma\leq C^2_{\rm P}+\frac{|\Omega|}{|\Gamma|^2}
\left(||\tau||+C_{\rm P}\kappa\frac{|\Gamma|}{|\Omega|^{1/2}}\right)^2=
C^2_{\rm P}+\left(\frac{|\Omega|^{1/2}}{|\Gamma|}
||\tau||+\kappa C_{\rm P}\right)^2
\ee
}
Then,
\ben
\label{quadrilateral}
C^2_\Gamma\,\leq \,
C^2_{\rm P}+\left(\kappa\,C_{\rm P}+\frac{\Sigma^{1/2}_{\alpha\beta}|\Omega|}{\sqrt{6}\,|\Gamma|}\right)^2.
\een
Note that (\ref{quadrilateral}) also holds for more general cases in which $\Omega_2$
is a bounded Lipschitz domain having only one common boundary
with $\Omega_1$, which is $BC$.

%%%%%%%%%%%%%%
\subsection{Tetrahedrons}
\label{subsec:tetrahedron}
Consider a tetrahedron OABC (Fig. 2 left), where $\Gamma$ is the triangle ABC which
lies in the plane $Ox_1x_2$.
\begin{figure}[h]
\label{fig:tetrahedron}
\centerline{\includegraphics[width=1.96in]{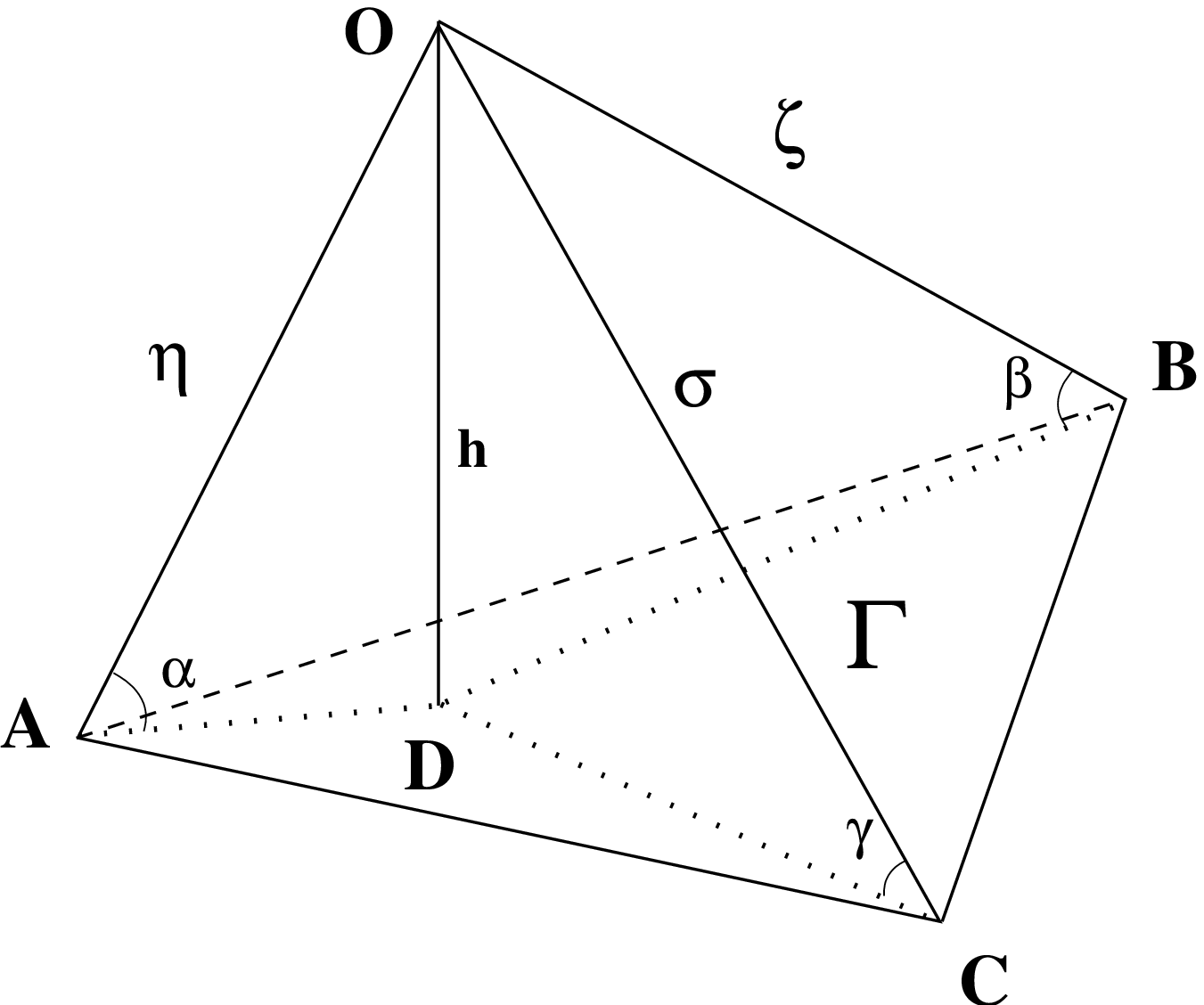}\quad
\includegraphics[width=1.2in]{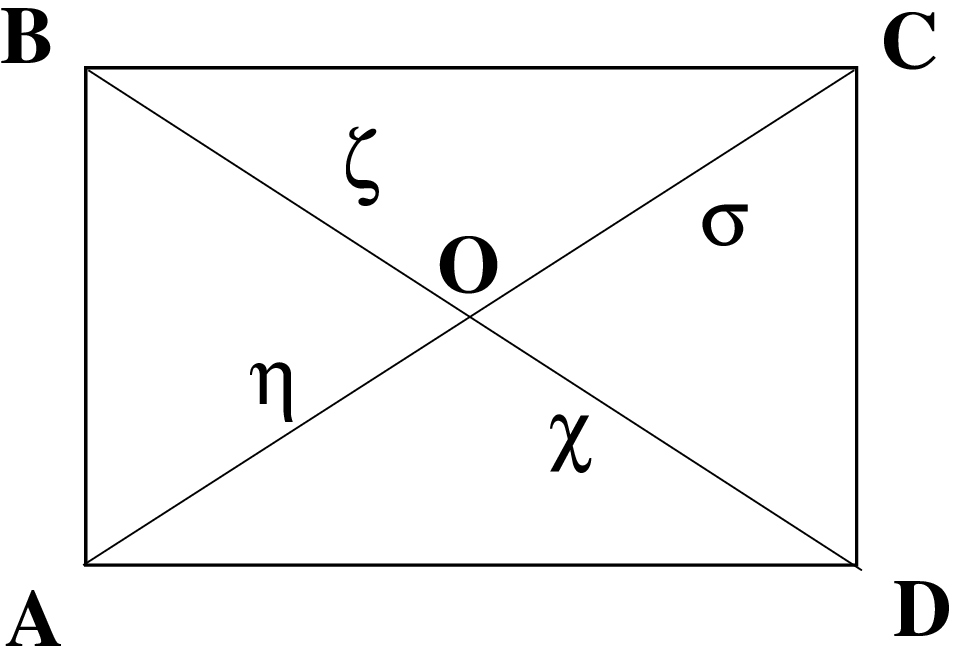}\quad
\includegraphics[width=1.2in]{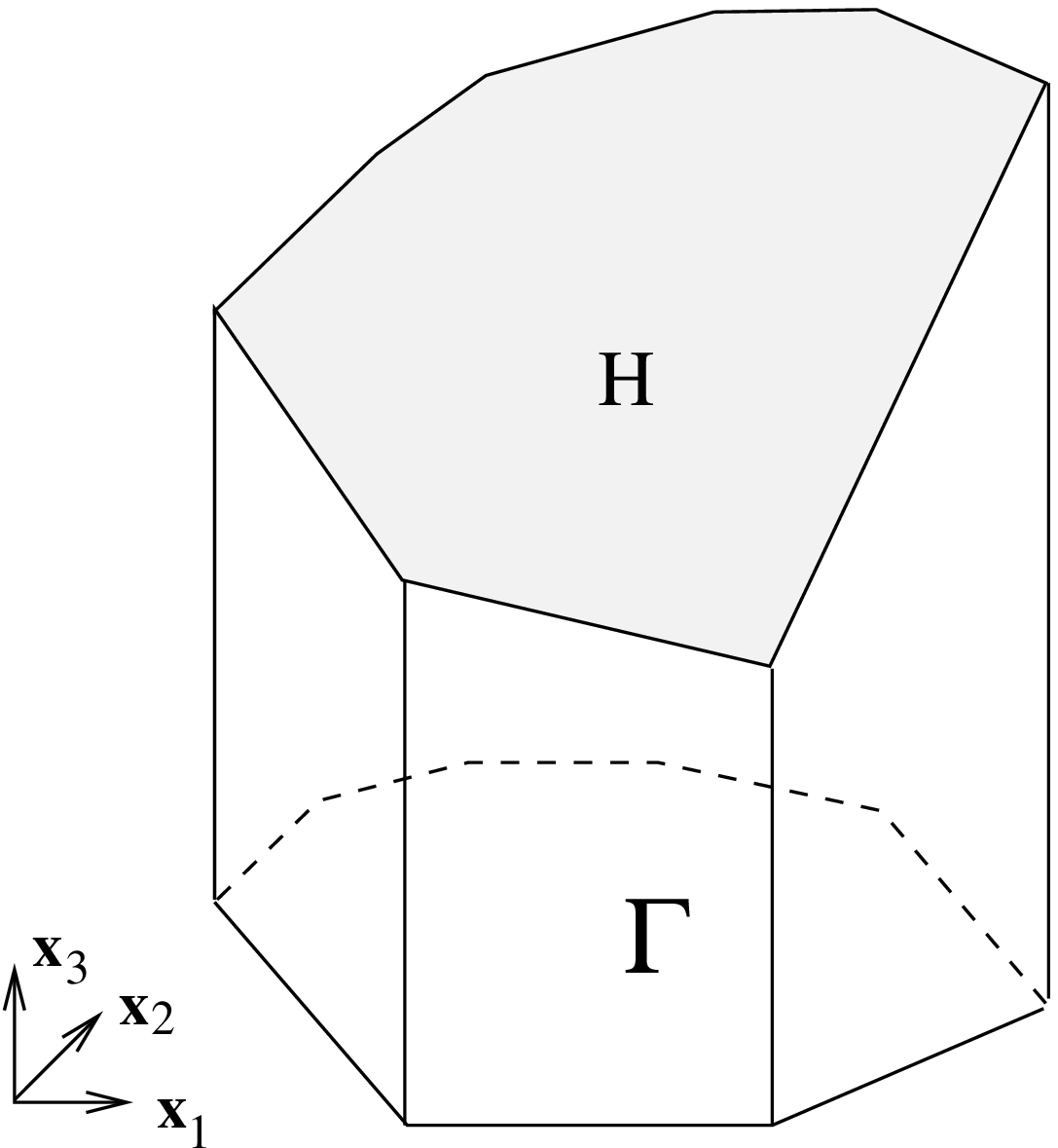}
}
\caption{Tetrahedron, pyramide, and prizm}
\end{figure}

%Let $\stackrel{\rightarrow}{OA}=\bfeta$, 
%$\stackrel{\rightarrow}{OB}=\bfzeta$, and
%$\stackrel{\rightarrow}{OC}=\bfsigma$.
%Define the vectors $\bftau_A$, $\bftau_B$, $\bftau_C$, such that
%$\bftau_A=\mu_A  {\bfeta}$ and $\bftau_A\cdot \bfe_3=-1$,
%$\bftau_B=\mu_B  {\bfeta}$ and $\bftau_B\cdot \bfe_3=-1$,
%$\bftau_C=\mu_C  {\bfeta}$ and $\bftau_C\cdot \bfe_3=-1$.
%Since $\bfeta\cdot\bfe_3=|\bfeta|\cos(\pi/2+\alpha)=-|\bfeta|\sin\alpha$
%we obtain $\mu_A=\frac{1}{|\bfeta|\sin\alpha}$, where
% $\sin\alpha=\frac{h}{|\bfeta|}$.
% 
% 
At vertexes A, B, and C, we define three constant vectors
$$
\wh\bftau_A=\frac{  {\bfeta}}{|\bfeta|\sin\alpha},\quad
\wh\bftau_A=\frac{  {\bfzeta}}{|\bfzeta|\sin\beta},\quad {\rm and}\quad
\wh\bftau_A=\frac{  {\bfsigma}}{|\bfsigma|\sin\gamma}.
$$
 The vector
field $\bftau(x_1,x_2,x_3)$ is the affine field in $\Omega$ with zero value
at the vertex O. 
We compute
\be
\IntO |\tau|^2\,dx=\int\limits^h_0\left(\;\int\limits_{\omega(x_3)} |\tau(x_1,x_2,x_3)|^2dx_1dx_2\right)dx_3.
\ee
Notice that the cross section $\omega(x_3)$ associated with the height $x_3$ has the measure
$|\omega(x_3)|=\left(1-\frac{x_3}{h}\right)^2|\Gamma|$
and at the respective point $A^\prime$ on $OA$ (which third coordinate is $x_3$)
by linear proportion we have
$\bftau_{A^\prime}=\left(1-\frac{x_3}{h}\right)\wh\bftau_A$. Similar relations hold  for the
points $B^\prime$ and $C^\prime$ associated with the cross section on the height $x_3$.
For the internal integral we apply the Gaussian quadrature 
for $|\tau|^2=\tau^2_1+\tau^2_2+\tau^3_3$ %(second order polynomial)
and obtain
\comment{
\begin{multline*}
\int\limits_{\omega(x_3)} |\tau(x_1,x_2,x_3)|^2\,dx_1dx_2=\\
=\frac{|\omega(x_3)|}{3\cdot 4}\left(|\bftau_{A^\prime}+\bftau_{B^\prime}|^2+
|\bftau_{B^\prime}+\bftau_{C^\prime}|^2+|\bftau_{A^\prime}+\bftau_{C^\prime}|^2
\right)\,dx_1dx_2
\\
=\frac{|\omega(x_3)|}{6}\left(|\bftau_{A^\prime}|^2+
|\bftau_{B^\prime}|^2+|\bftau_{C^\prime}|^2+\bftau_{A^\prime}\cdot\bftau_{B^\prime}+\bftau_{A^\prime}\cdot\bftau_{C^\prime}+
\bftau_{B^\prime}\cdot\bftau_{C^\prime}\right)=\\
=\frac16 \left(1-\frac{x_3}{h}\right)^4|\Gamma|
\left(|\wh\bftau_{A}|^2+
|\wh\bftau_B|^2+|\wh\bftau_C|^2+\wh\bftau_A\cdot\wh\bftau_B+\wh\bftau_A\cdot\wh\bftau_C+
\wh\bftau_B\cdot\wh\bftau_C\right)
\end{multline*}
We have
\begin{multline*}
\IntO |\tau|^2\,dx=\frac{h}{30}|\Gamma|
\Bigl(\frac{1}{\sin^2\alpha}+\frac{1}{\sin^2\beta}+\frac{1}{\sin^2\gamma}
+\frac{1}{\sin\alpha\sin\beta}\frac{  {\bfeta}\cdot 
 {\bfzeta}}{|\bfeta||\bfzeta|}\\+
\frac{1}{\sin\alpha\sin\gamma}\frac{  {\bfeta}\cdot 
 {\bfsigma}}{|\bfeta||\bfsigma|}+
\frac{1}{\sin\beta\sin\gamma}\frac{  {\bfzeta}\cdot 
 {\bfsigma}}{|\bfzeta||\bfsigma|}.
\Bigr)
\end{multline*}
Since $|\Omega|=\frac13 h|\Gamma|$, we find that
\begin{multline}
\frac{|\Omega|}{|\Gamma|^2}\|\tau\|^2_{2,\Omega}=\\
\frac13 \frac{h^2}{30}
\Bigl(\frac{|\bfeta|^2}{h^2}+\frac{|\bfzeta|^2}{h^2}+\frac{|\bfsigma|^2}{h^2}
+\frac{  {\bfeta}\cdot 
 {\bfzeta}}{h^2}+
\frac{  {\bfeta}\cdot 
 {\bfsigma}}{h^2}+
\frac{  {\bfzeta}\cdot 
 {\bfsigma}}{h^2})=\\
 \frac{1}{90}|
\left(|\bfeta|^2+|\bfzeta|^2+|\bfsigma|^2
+ {\bfeta}\cdot  {\bfzeta}+
  {\bfeta}\cdot  {\bfsigma}+
 {\bfzeta}\cdot  {\bfsigma}\right)
\end{multline}
and
}
\ben
\label{tetrahedron}
\displaystyle C^2_\Gamma\,\leq\,\frac{{\rm d}^2_\Omega}{\pi^2}+
\frac{
|\bfeta|^2+|\bfzeta|^2+|\bfsigma|^2
+{\bfeta}\cdot {\bfzeta}+
{\bfeta}\cdot {\bfsigma}+
{\bfzeta}\cdot {\bfsigma}}
{90}
\een
\vskip3pt
In particular, for the equilateral tetrahedron  with all edges equal to $h$
we have
\be
\bfeta\cdot\bfzeta=\bfeta\cdot\bfsigma=\bfzeta\cdot\bfsigma=\frac12 h^2, \quad
{\rm d}_\Omega=h,
\ee
 and, therefore,
$
C_\Gamma\,\leq\, h\sqrt{\frac{1}{\pi^2}+
\frac{1}{20}}\approx 0.39h.
$

For  the right tetrahedron with nodes $(0,0,0)$, $(h,0,0)$, $(0,h,0)$, $(0,0,h)$
and face $\Gamma=\{x\in \overline \Omega,\,x_3=0\}$,
we have
 ${\rm d}_\Omega=h\sqrt{2}$, $|\eta|=h$, $|\zeta|=|\sigma|=h\sqrt{2}$,
scalar products are equal to $h^2$ and (\ref{tetrahedron}) yields
$
C_\Gamma\,\leq\, h\sqrt{\frac{2}{\pi^2}+
\frac{4}{45}}\approx 0.54 h.
$
Sharp constants $C_\Gamma$ for triangle and tetrahedrons has been recently
evaluated in \cite{MatculevichRepin2015}.
For the right tetrahedron, the constant computed in \cite{MatculevichRepin2015} numerically
 is $C_\Gamma\approx 0.3756 h$.

\subsection{Pyramide}
We can apply (\ref{tetrahedron}) in order to evaluate $C_\Gamma$ for a pyramid
OABCD, which can be divided into two tetrahedrons 
OABC and OACD  (Fig. 2 middle, view from above). Assume that the triangles
ABC and ACD have equal areas and $\Gamma$ is the pyramid basement
ABCD. Then, we can use (\ref{simplex1})
with $\tau$ defined in each tetrahedron as in \ref{subsec:tetrahedron}.
We obtain
\begin{multline}
\label{pyramide}
\displaystyle C^2_\Gamma\,\leq\,\frac{{\rm d}^2_\Omega}{\pi^2}
+
\frac{
2|\bfeta|^2+|\bfzeta|^2+2|\bfsigma|^2+|\bfchi|^2
+2{\bfeta}\cdot {\bfsigma}+
({\bfeta}+\bfsigma)\cdot( {\bfchi}+{\bfzeta})
}{180}.
\end{multline}

%%%%%%%%%%%%%%%
\subsection{Prizmatic  cells}
\label{sec:cylindrical}
Consider domains of the form (Fig.   2 right).
$$
\Omega=\{x\in {\mathbb R}^3\,\mid\,(x_1,x_2)\in \Gamma,\quad 0\leq\,x_3\,\leq H(x_1,x_2),\quad H(x_1,x_2)\geq H_{min}\}.
$$
By the same method as in \ref{sec:simplicial} we find that
\ben
\label{cylinder1}
C^2_{\Gamma}\,\leq\,\overline C^2_{\Gamma}:=C^2_{\rm P}+\left(\frac{\mean{H}_\Gamma}{\sqrt 3}+C_{\rm P}\,\kappa\right)^2.
\een
where
$\kappa=\left(\frac{\mean{H}_\Gamma}{H_{min}}-1\right) ^{1/2}$ characterises
variations of the mean height. 

In particular, if $H=const$  (so that $\kappa=0$) and $\Gamma$ is a convex
domain in ${\mathbb R}^{d-1}$, then
\ben
\label{cylinder2}
C^2_{\Gamma}\,\leq\,\frac{{\rm d}^2_\Gamma+H^2}{\pi^2}+\frac{H^2}{3}=
\frac{1}{\pi^2}\left({\rm d}^2_\Gamma+\left(1+\frac{\pi^2}{3}\right)H^2\right).
\een
For a parallelepiped with $\Gamma=(0,a)\times(0,b)$, we know that
the exact value of $C_\Gamma$ is $\frac{1}{\pi}\max\{2H,a,b\}$. In this case
${\rm d}^2_\Gamma=a^2+b^2$ and we can compare it with the upper bound that
follows from (\ref{cylinder1}):
\ben
\label{cylinder3}
\frac{\overline C_{\Gamma}}{C_{\Gamma}}=\frac{\sqrt{a^2+b^2+4.29 H^2}}{\max\{2H,a,b\}}\geq 1.
\een
For the cases where one dimension of $\Omega$ dominates, $\overline C_\Gamma$
is a good approximation of $C^2_{\Gamma}$. If $a=b=H$ (cube), then
we have $\frac{\overline C_{\Gamma}}{C_{\Gamma}}=\frac{\sqrt{6.29}}{2}\approx1.25$. The largest ratio is for $a=b=2H$ ( $\approx 1.75$).

\comment{\color{blue}\small
We set $\bftau=(0,0,-1+\frac{x_3}{H(x_1,x_2)})$, $\dvg\bftau=\frac{1}{H(x_1,x_2)}$.
\begin{multline*}
\IntO |\bftau|^2\,dx=\int\limits_{\Gamma}\int\limits^{H(x_1,x_2)}_0 \left(1-\frac{x_3}{H}\right)^2dx_3dx_1dx_2\\
=\int\limits_{\Gamma} \frac{1}{H^2(x_1,x_2)}\int\limits^{H(x_1,x_2)}_0 (x_3-H)^2dx_3dx_1dx_2\\
=\int\limits_{\Gamma} \frac{1}{3H^2}(x_3-H)^3\mid^H_0 dx_3dx_1dx_2
=
\int\limits_{\Gamma} \frac{H}{3}dx_1dx_2=\frac13|\Omega|\quad[L]^d.
\end{multline*}
Notice that $\mean{\dvg\bftau}_\Omega=\frac{|\Gamma|}{|\Omega|}$,
and  $|\Omega|=\mean{H}_\Gamma|\Gamma|$. We have
\be
\int\limits_{\Gamma}\int\limits^{H}_0 \left(\frac{|\Gamma|}{|\Omega|}-\frac{1}{H(x_1,x_2)}\right)^2dx_3dx_1dx_2=\int\limits_{\Gamma} \left(H\frac{|\Gamma|^2}{|\Omega|^2}-2\frac{|\Gamma|}{|\Omega|}+\frac{1}{H}\right) dx_1dx_2\\
\leq \frac{|\Gamma|^2}{|\Omega|^2}|\Omega|-2\frac{|\Gamma|^2}{|\Omega|}+
\int\limits_{\Gamma} \frac{1}{H} dx_1dx_2\,\leq-\frac{|\Gamma|^2}{|\Omega|}+\frac{|\Gamma|}{H_{min}}=|\Gamma|
\left(\frac{1}{H_{min}}-\frac{1}{\mean{H}_\Gamma}\right)\quad
[L]^{d-2}.
\ee
Dimensionality: $C_{\rm P}=[L]$, $|\Omega|=[L]^d$, $\frac{|\Omega|}{|\Gamma|^2}=[L]^{d}/[L]^{2d-2}=[L]^{2-d}$, \\
$\frac{|\Gamma|}{H}=[L]^{d-1}/[L]=[L]^{d-2}$.    
We have the estimate
\be
C^2_\Gamma\,\leq\,C^2_{\rm P}+\frac{|\Omega|}{|\Gamma|^2}
\left\{\frac{\mu}{3}|\Omega|+\mu^*C^2_{\rm P}|\Gamma|\left(\frac{1}{H_{min}}-\frac{1}{\mean{H}_\Gamma}\right) \right\}\\
=\mean{H}^2_\Gamma\frac{\mu}{3}+C^2_{\rm P}\left(1+\mu^*\frac{|\Omega|}{|\Gamma|}\left(\frac{1}{H_{min}}-\frac{1}{\mean{H}_\Gamma}\right) \right)\\
=C_{\rm P}^2+\mean{H}^2_\Gamma\frac{\mu}{3}+C^2_{\rm P}\left(1+\mu^*\left(\frac{\mean{H}_\Gamma}{H_{min}}-1\right) \right)=C_{\rm P}^2+
\mu\frac{\mean{H}^2_\Gamma}{3}+
\mu^*C^2_{\rm P}\kappa^2,
\ee
where 
$\kappa^2=\frac{\mean{H}_\Gamma}{H_{min}}-1$ is a 
geometrical parameter associated with  "distortion" of $H(x_1,x_2)$.
\color{black}
}
%%%%%%%%%%%%%%%%%%%%%%%%%%%

\section{Boundary Poincar\'e inequalities for vector valued functions}

Estimates (\ref{1.10})
and (\ref{1.11}) yield analogous estimates for vector valued
functions in $H^1(\Omega,\Rd)$. 
Let
$ \Omega\in \Rd$ ( $d\geq 1$) be a connected  domain with $N$ plane faces
$\Gamma_i\in {\mathbb R}^{d-1}$.
%\begin{figure}[h]
%\label{fig:polygonalcell}
%\centerline{
%\includegraphics[width=1.38in]{cell.eps}
%\qquad\qquad\qquad\includegraphics[width=1.38in]{cell3d.eps}}
%\caption{}
%\end{figure}
Assume that we have $d$ unit vectors $\bfn^{(k)}$, (associated with some faces) that
form a linearly independent
system in $\Rd$, i.e.,
\ben
\label{int1}
{\det \bfN\not=0},\qquad  \bfN:=\left\{n^{(i)}_j\right\}\in {\mathbb M}^{d\times d},\quad i,j=1,2,...,d,
\een
where $n^{(i)}_j=\bfn^{(i)}\cdot\bfe_j$ and  $\bfe_i$ denote the Cartesian orts. Then, $\bfv\in H^1(\Omega,\Rd)$ satisfies a Poincar\'e type estimate
provided that it satisfies  zero mean conditions (\ref{int2}).
\begin{Theorem}
\label{InqVector}
If (\ref{int1}) holds and
\ben
\label{int2}
\mean{\bfv\cdot\bfn^{(i)}}_{\Gamma_i} =0\quad i=1,2,...,d,
\een
then
\ben
\label{int3}
\|\bfv\|_\Omega\,\leq\, {\mathds C}(\Omega,\Gamma_1,...,\Gamma_d)\|\nabla \bfv\|_\Omega,
\een
where ${\mathds C}>0$ depends only on geometrical properties of the cell.
\end{Theorem}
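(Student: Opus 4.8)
The plan is to establish Theorem \ref{InqVector} by a compactness (contradiction) argument of the kind already used for (\ref{1.1}), (\ref{1.10}) and (\ref{1.11}); the quantitative content of the paper is not here but in the explicit majorants derived afterwards for polygonal cells. First I would suppose that (\ref{int3}) fails for every constant, so that there exists a sequence $\{\bfv_k\}\subset H^1(\Omega,\Rd)$ with $\|\bfv_k\|_\Omega=1$, $\|\nabla\bfv_k\|_\Omega\to 0$, and $\mean{\bfv_k\cdot\bfn^{(i)}}_{\Gamma_i}=0$ for $i=1,\dots,d$ and all $k$. The $H^1(\Omega,\Rd)$ norms of $\bfv_k$ are then bounded, so after passing to a subsequence (not relabelled) one has $\bfv_k\rightharpoonup\bfv$ weakly in $H^1(\Omega,\Rd)$ and, by the Rellich--Kondrachov theorem for the Lipschitz domain $\Omega$, $\bfv_k\to\bfv$ strongly in $L^2(\Omega,\Rd)$.

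Next I would identify the limit. Strong $L^2$ convergence gives $\|\bfv\|_\Omega=1$, so $\bfv\not\equiv 0$; weak convergence $\nabla\bfv_k\rightharpoonup\nabla\bfv$ together with $\|\nabla\bfv_k\|_\Omega\to 0$ forces $\nabla\bfv=0$, and since $\Omega$ is connected this means $\bfv\equiv\bfc$ for some constant vector $\bfc\in\Rd$, $\bfc\neq 0$. It remains to pass the zero-mean conditions to the limit. For each $i$ the map $\bfw\mapsto\mean{\bfw\cdot\bfn^{(i)}}_{\Gamma_i}$ is linear and, by the trace theorem for Lipschitz boundaries (the trace operator $H^1(\Omega)\to L^2(\partial\Omega)$ is bounded and $\mathrm{meas}_{d-1}\Gamma_i>0$), bounded on $H^1(\Omega,\Rd)$; hence it is weakly continuous, and
\[
0=\mean{\bfv_k\cdot\bfn^{(i)}}_{\Gamma_i}\ \longrightarrow\ \mean{\bfc\cdot\bfn^{(i)}}_{\Gamma_i}=\bfc\cdot\bfn^{(i)},\qquad i=1,\dots,d .
\]
(One may instead invoke compactness of $H^1(\Omega)\hookrightarrow L^2(\partial\Omega)$, which makes the traces converge strongly and is more than enough.) Thus $\bfc\cdot\bfn^{(i)}=0$ for every $i$; expanding $\bfc$ in the basis dual to $\{\bfn^{(1)},\dots,\bfn^{(d)}\}$, which exists precisely because $\det\bfN\neq 0$, we obtain $\bfc=0$, a contradiction. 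The constant $\mathds C$ produced this way depends only on $\Omega,\Gamma_1,\dots,\Gamma_d$, since no other data entered the argument.

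I expect the only delicate point — a mild one — to be the passage of the boundary mean-value conditions to the limit: it must be checked that the functionals $\mean{\,\cdot\,}_{\Gamma_i}$ are genuinely continuous on $H^1$, which relies on the Lipschitz regularity of $\partial\Omega$ and on each $\Gamma_i$ carrying positive $(d-1)$-measure (so that it is a full-dimensional piece of a face, not a lower-dimensional set). Everything else — the extraction of a convergent subsequence, the identification of the limit as a constant, the final linear-algebra step — is routine. I would also note explicitly that this argument yields no bound on the size of $\mathds C$; a computable majorant is obtained separately for polygonal $\Omega$ by writing $\bfv=\sum_i(\bfv\cdot\bfn^{(i)})\,\bfb^{(i)}$ with $\{\bfb^{(i)}\}$ the dual basis and applying the scalar estimate (\ref{1.10}) to each normal component $\bfv\cdot\bfn^{(i)}$, which is the route taken immediately after the theorem.
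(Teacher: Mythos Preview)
Your proof is correct and follows essentially the same compactness-by-contradiction argument as the paper: normalise a putative violating sequence, extract a subsequence converging weakly in $H^1$ and strongly in $L^2$, identify the limit as a nonzero constant vector, pass the zero-mean trace conditions to the limit via the trace theorem, and conclude from $\det\bfN\neq 0$ that the constant must vanish. The only cosmetic difference is that the paper writes out the trace estimate $\|\bfv_k-\bfw\|_{2,\Gamma_i}\leq C(\|\bfv_k-\bfw\|_{2,\Omega}+\|\nabla\bfv_k\|_{2,\Omega})$ explicitly rather than invoking weak continuity of the mean functional, and your closing remark about the quantitative bound via a dual basis is not quite how the paper proceeds (it instead sums the scalar estimates and analyses the eigenvalues of $\bfT=\sum_k\bfn^{(k)}\otimes\bfn^{(k)}$), but this lies outside the theorem itself.
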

\begin{proof}
Assume the opposite. Then, there exists a sequence $\{\bfv_k\}$ such that
$\mean{\bfv_k\cdot\bfn^{(i)}}_{\Gamma_i}=0$  and
\ben
\label{proof1}
\|\bfv_k\|\geq \,k\,\|\nabla \bfv_k\|.
\een
Without a loss of generality we can operate with a sequence of normalised functions, so that
\ben
\label{proof2}
\|\bfv_k\|=1.
\een
Hence,
\ben
\label{proof3}
\|\nabla \bfv_k\|\leq\,\frac{1}{k}\,\rightarrow\,0\;{\rm as}\;k\rightarrow+\infty.
\een
We conclude that there exists a subsequence (for simplicity we omit additional subindexes and keep the notation $\{\bfv_k\}$) such that
\ben
\label{proof4}
&\bfv_k\,\rightharpoonup\,\bfw\;&{\rm in}\; H^1( \Omega,\Rd),\\
\label{proof5}
&\bfv_k\,\rightarrow\,\bfw\;&{\rm in}\;L^2( \Omega,\Rd).
\een
In view of (\ref{proof4}),
\be
0=\lim\inf_{k\rightarrow +\infty} \|\nabla \bfv_k\|\,\geq\, \|\nabla \bfw\|,
\ee
we see that $\bfw\in P^0( \Omega,\Rd)$.
For any face $\Gamma_i$ we have (in view of the trace theorem)
\ben
\label{proof6}
\| \bfv_k-\bfw\|_{2,\Gamma_i}\leq C\left(\|\bfv_k-\bfw\|_{2,\Omega}
+\|\nabla \bfv_k\|_{2,\Omega}\right).
\een
We recall (\ref{proof3}) and (\ref{proof5}) and conclude that the traces of $\bfv_k$ on ${\Gamma_i}$
converge to the trace of $\bfw$. Since  $\bfv_k\cdot\bfn^{(i)}$
have zero means, 
\ben
\label{proof7}
\bfw\cdot \bfn_i |\Gamma_i|=\int\limits_{\Gamma_i}\bfw\cdot \bfn_i\,d\Gamma=0
\een
and
$\bfw$ is orthogonal to $d$ linearly independent vectors, i.e.,
 $\bfw=0$.
On the other hand, $\|\bfw\|=1$. We obtain a contradiction, which shows that the assumption
 is not true.
\end{proof}

We notice that conditions of the Theorem are very flexible with respect to choosing $\Gamma_i$ and vectors $\bfn^{(i)}$ entering the integral type conditions (\ref{int2}). Probably the most interesting case is where $\bfn^{(i)}$ are defined as unit outward normals
 to faces $\Gamma_s$. If $d=2$, then we can also define $\bfn^{(i)}$ 
as unit tangential vectors. Moreover, in the proof it is not essential
that $\bfn^{(i)}$ is strictly related to one face $\Gamma_i$ (only the condition (\ref{int1}) is essential).
For example, if $d=3$ then we can define two vectors as two mutually orthogonal tangential vectors of one face and the third one as a normal vector to another face. Theorem holds for this case as well. Henceforth, for the sake of  definiteness we assume that
$\bfn^{(i)}$ are normal vectors 
or  mean normal vectors (for curvilinear faces) associated with faces $\Gamma_i$,
$i=1,2,...,d$. Possible modifications of the results to other cases are rather obvious.

%%%%%%%%%%%%%%%%%%%%%%%%%%%%%%%%%%%
\subsection{Value of the constant for $d=2$}
Estimates of the constant 
${\mathds C}(\Omega,\Gamma_1,...,\Gamma_d)$ follow from
(\ref{1.10}) and depend on the constants
$C_{\Gamma_i}(\Omega)$.
Now, our goal is to deduce explicit and easily computable bounds of 
${\mathds C}(\Omega,\Gamma_1,...,\Gamma_d)$.

First, we consider a special, but important case where $\Omega$
is a polygonal domain in $ {\mathbb R}^{2}$. Let
$\Gamma_1$ and $\Gamma_2$ be two faces
selected for the interpolation of $\bfv$. The respective
normals
$\bfn^{(1)}=(n^{(1)}_1,n^{(1)}_2)$ and
$\bfn^{(2)}=(n^{(2)}_1,n^{(2)}_2)$ must satisfy the condition 
(\ref{int1}), which means that
\ben
\label{3.4}
\angle(\bfn^{(1)},\bfn^{(2)})=\beta\in (0,\pi).
\een
Let the conditions (\ref{int2}) hold. Then
\ben
\label{3.5}
&&\|n^{(1)}_1v_1+n^{(1)}_2v_2\|^2\leq
C^2_{\Gamma_1}(\Omega)\|n^{(1)}_1\nabla v_1+n^{(1)}_2\nabla v_2\|^2,\\
\label{3.6}
&&\|n^{(2)}_1v_1+n^{(2)}_2v_2\|^2\leq
C^2_{\Gamma_2}(\Omega)\|n^{(2)}_1\nabla v_1+n^{(2)}_2\nabla v_2\|^2.
\een
Introduce the matrix
\be
{\bfT}:=\bfn^{(1)}\otimes\bfn^{(1)}+\!\bfn^{(2)}\otimes\bfn^{(2)}\!
=\!\left(
\begin{array}{cc}
(n^{(1)}_1)^2+(n^{(2)}_1)^2\;&\;n^{(1)}_1n^{(1)}_2+n^{(2)}_1n^{(2)}_2\\
n^{(1)}_1n^{(1)}_2+n^{(2)}_1n^{(2)}_2
&(n^{(1)}_2)^2+(n^{(2)}_2)^2
\end{array}
\right).
\ee
Here and later on $\otimes$ denotes the diadic product of vectors.
Summation of (\ref{3.5}) and (\ref{3.6}) yields
\begin{multline}
\label{3.7}
\IntO {\bfT}\bfv\cdot\bfv dx_1dx_2\\
\leq C^2
\IntO(T_{11} |\nabla v_1|^2+2T_{12}\nabla v_1\cdot\nabla v_2+T_{22} |\nabla v_2|^2)dx_1dx_2,
\end{multline}
where
$$
C=\max\{C_{\Gamma_1}(\Omega);C_{\Gamma_2}(\Omega)\}.
$$
It is easy to see that $\bfT$ is a positive definite matrix. Indeed,
\begin{multline*}
\det(\bfT-\lambda \bfE)\\=((n^{(1)}_1)^2+(n^{(2)}_1)^2-\lambda)
((n^{(1)}_2)^2+(n^{(2)}_2)^2-\lambda)-\bigl(
n^{(1)}_1n^{(1)}_2+n^{(2)}_1n^{(2)}_2\bigr)^2\\
%=\lambda^2-
%\lambda[(n^{(1)}_1)^2+(n^{(2)}_1)^2
%+(n^{(1)}_2)^2+(n^{(2)}_2)^2]-[n^{(1)}_1n^{(1)}_2]^2-2n^{(1)}_1n^{(1)}_2n^{(2)}_1n^{(2)}_2\\
%-[n^{(2)}_1n^{(2)}_2]^2+[(n^{(1)}_1)^2+(n^{(2)}_1)^2][(n^{(1)}_2)^2+(n^{(2)}_2)^2]\\
%=\lambda^2-2\lambda+(n^{(1)}_1)^2(n^{(2)}_2)^2-2n^{(1)}_1n^{(1)}_2n^{(2)}_1n^{(2)}_2+(n^{(2)}_1)^2(n^{(1)}_2)^2\\
=\lambda^2-2\lambda+ \bigl(n^{(1)}_1n^{(2)}_2-n^{(2)}_1n^{(1)}_2\bigr)^2=\lambda^2-2\lambda+\left(\det{\bfN}\right)^2,
\end{multline*}
where 
\be
{\bfN}:=\left(\begin{array}{c}
\bfn^{(1)}\\
\bfn^{(2)}
\end{array}
\right).
\ee
Hence for any vector $\bfb$, we have
$
\lambda_{1}|{\bfb }|^2\leq\, {\bfT \bfb\cdot \bfb}\,\leq\,
\lambda_{2}|{\bfb }|^2,
$
and
\ben
\label{3.8}
\lambda_{1,2}=1\mp\sqrt{1-\left(\det{\bfN}\right)^2}.
\een
If $\bfn^{(1)}$ and $\bfn^{(1)}$ are orthogonal,
then $\det{\bfN}=1$ and the unique eigenvalue of $\bfN$ is $\lambda=1$.
In this case, the left hand side of (\ref{3.7})
coincides with $\|\bfv\|^2$.
In all other cases $\det{\bfN}<1$ and $\lambda_1<\lambda_2$.

We can always select the coordinate system such that
$$
n^{(1)}_1=1, \;n^{(1)}_2=0,\;
n^{(2)}_1=-\cos\beta,\; n^{(2)}_2=\sin\beta. 
$$
Then,
$$T_{11}=1+\cos^{2}\beta,\; T_{22}=1-\cos^{2}\beta,\;
T_{12}=-\sin\beta\cos\beta,
$$
and the matrix is
\be
{\bfN}:=\left(\begin{array}{cc}
1\;&\;0\\
-\cos\beta\;&\;\sin\beta
\end{array}
\right).
\ee
We see that $ \det{\bfN}=\sin\beta$,
and $\lambda_{1}=1- \Mod{\cos\beta}$.

Consider the right hand side of (\ref{3.7}).
It  is bounded from above by the quantity
\be
I(\bfv):=C^2\IntO\left((T_{11}+\gamma |T_{12}|)|\nabla v_1|^2+(T_{22}+\gamma^{-1} |T_{12}|)|\nabla v_2|^2\right)dx,
\ee
where $\gamma$ is any positive number. We define
$\gamma$ by means of the relation
$T_{11}-T_{22}=(\gamma^{-1}-\gamma)|T_{12}|$, which yields
$\gamma=\frac{1-|\cos\beta|}{\sin\beta}$. Then,
\ben
\label{3.9}
I(\bfv)\,\leq\, (1+|\cos\beta|)\|\nabla\bfv\|^2.
\een

From (\ref{3.7}) and (\ref{3.9}), we find that
\ben
\label{3.10}
\Frame{\|\bfv\|\,\leq\,\max\limits_{i=1,2}
\left\{C_{\Gamma_i}(\Omega)\right\}\,
\sqrt{\frac{1+|\cos\beta|}{1-|\cos\beta|}}\,
\|\nabla \bfv\|.}
\een
This is the Poincar\'e type inequality
for the vector valued function $\bfv$ with
zero mean normal traces on $\Gamma_1$ and $\Gamma_2$. It is worth noting that
for small $\beta$ (and for $\beta$ close to $\pi$) the constant blows up. Therefore, interpolation
operators (considered in Sect. 4) should avoid such situations.
%%%%%%%%%%%%%%%%%%%%%%%%%%%%%%
\subsection{Value of the constant for $d\geq 3$}
\label{subsec:constvector}
Now we are concerned with the general case and deduce the estimate valid
for any dimension $d$.

In view of (\ref{int2})
we have
\ben
\label{3.11}
&&\sum\limits^d_{k=1}\|\bfn^{(k)}\cdot\bfv\|^2_{2,\Omega}\,\leq\,C^2\,
\sum\limits^d_{k=1}
\IntO \Bigl(\sum\limits^d_{i=1}
n^{(k)}_i\nabla v_i\Bigr)^2\,d\bfx,
2(\Omega,\Gamma_3)\|n^{(3)}_1\nabla v_1+n^{(3)}_2\nabla v_2+n^{(3)}_3 \nabla v_3\|^2
\een
where 
$$
C=\max\limits_{k=1,2,...,d}\;
\left\{C_{\Gamma_k}(\Omega)\right\}.
$$
In view of the relation
\be
(\bfn^{(k)}\otimes\bfn^{(k)})\bfv\cdot\bfv
=(\bfn^{(k)}(\bfn^{(k)}\cdot\bfv))\cdot\bfv=(\bfn^{(k)}\cdot\bfv)^2,
\ee
 the left hand side of (\ref{3.11}) is
$
\IntO{\bfT}\bfv\cdot\bfv,
$
where 
\ben
\label{3.12}
&&\bfT:=\sum\limits^d_{k=1}\bfn^{(k)}\otimes\bfn^{(k)}.\qquad
%\Mod{\bfT}\,=\,d.
\een
If $\bfn^{(k)}$ form a linearly independent system, then
$\bfT$ is a positive definite matrix.
Indeed, $\bfT \bfb \cdot \bfb=\sum\limits^d_{k=1}(\bfn^{(k)}\cdot \bfb)^2$. 
Hence, $\bfT \bfb \cdot \bfb=0$ if and only if $\bfb $ has zero projections to $d$ linearly
independent vectors $\bfn^{(k)}$, i.e.,  $\bfT \bfb \cdot \bfb=0$ if and only if ${\bfb }=0$. Therefore,
\ben
\label{3.13}
\lambda_{1}\,\|\bfv\|^2\,\leq\,
\IntO {\bfT \bfv \cdot \bfv}\,d\bfx,
\een
where $\lambda_1>0$ is the minimal eigenvalue of $\bfT$.

Consider the right hand side of (\ref{3.11}). We have
\begin{multline*}
\IntO\Bigl(\sum\limits^d_{i=1}
n^{(k)}_i\nabla v_i\Bigr)^2d\bfx=
\IntO \sum\limits^d_{i,j=1}n^{(k)}_in^{(k)}_j
\nabla v_i \cdot\nabla v_j d\bfx\\
=
\sum\limits^d_{i,j=1}n^{(k)}_in^{(k)}_j\IntO \nabla v_i \cdot\nabla v_j d\bfx
%=\sum\limits^d_{i,j=1}n^{(k)}_in^{(k)}_j G_{ij}
=\bfn^{(k)}\otimes\bfn^{(k)}:\bfG,
\end{multline*}
where 
$$
\bfG(\bfv):=\{G_{ij}\}, \quad 
G_{ij}(\bfv)=\IntO\nabla v_i\cdot\nabla v_j d\bfx.
$$ 
Hence,
\ben
\label{3.14}
\sum\limits^d_{k=1}
\IntO \Bigl(\sum\limits^d_{i=1}
n^{(k)}_i\nabla v_i\Bigr)^2\,d\bfx=\bfT:\bfG(\bfv)\,\leq\,|\bfT|\,|\bfG(\bfv)|.
\een

Now (\ref{3.11}), (\ref{3.12}), (\ref{3.13}), and
(\ref{3.14}) yield the estimate
\be
\|\bfv\|^2\,\leq\,C^2\frac{1}{\lambda_1}|\bfT|\,|\bfG(\bfv)|
\leq\,C^2\frac{d}{\lambda_1}\,|\bfG(\bfv)|.
\ee
Since $|\bfG(\bfv)|\leq \,\|\nabla\bfv\|^2$, for any $\bfv\in H^1(\Omega,\Rd)$ satisfying
(\ref{int2}) we have
\ben
\label{3.15}
\Frame{\|\bfv\|\,\leq\,C\,\sqrt{\frac{d}{\lambda_1}}\|\nabla\bfv\|.}
\een
In other words, the constant in (\ref{3.15})
can be defined as follows:
\ben
\label{3.16}
\mathds C(\Omega,\Gamma_1,\Gamma_2,...,\Gamma_d)\,=\,
\max\limits_{k=1,2,...,d}
\{C_{\Gamma_k}(\Omega)\}\;
\sqrt{\frac{d}{\lambda_1}},
\een
where $\lambda_1$ is the minimal eigenvalue
of $\bfT$.

For $d=2$ this estimate exposes a slightly worse constant than (\ref{3.10})
with the factor $\sqrt{\frac{2}{1-|\cos\beta|}}$
instead of $\sqrt{\frac{1+|\cos\beta|}{1-|\cos\beta|}}$.
%%%%%%%%%%%%%%%%%%%%%%%%%%%%%%%
\section{Interpolation of functions}
The classical Poincar\'e inequality (\ref{1.1}) yields a simple interpolation
operator ${\mathbb I}_\Omega: H^1(\Omega)\,
\rightarrow\,P^0(\Omega)$ defined by the relation
$
{\mathbb I}_\Omega w:=\mean{ w }_\Omega$.
In view of (\ref{1.1}), we know that
\ben
\label{4.1}
\|w-{\mathbb I}_\Omega w\|_{2,\Omega}\,\leq\,C_{\rm P}(\Omega)\|\nabla w\|_{2,\Omega},
\een
which means that the interpolation operator
is stable and $C_{\rm P}(\Omega)$ is the respective constant.

Above discussed estimates for functions with zero mean traces yield somewhat
different interpolation operators for scalar and vector valued
functions.  For a scalar valued function $w\in H^1(\Omega)$, we set $\INT_\Gamma(w):=\mean{w}_\Gamma$, i.e., the
interpolation operator uses mean values of $w$ a $d-1$ -- dimensional set
$\Gamma$. Since $\mean{w-\INT_\Gamma w}_\Gamma=0$, 
we  use (\ref{1.10}) and obtain the interpolation estimate
 \ben
 \label{4.2}
\|w-{\mathbb I}_\Gamma w\|_{2,\Omega}\,\leq\,C_\Gamma(\Omega)
\|\nabla w\|_{2,\Omega},
\een
where the constant $C_\Gamma$ appears as the interpolation
constant.
Analogously, (\ref{1.11}) yields an
 interpolation estimate for the boundary trace
\ben
 \label{4.4}
\|w-{\mathbb I}_\Gamma w\|_{2,\Gamma}\,\leq\,C^{\rm Tr}_\Gamma
\|\nabla w\|_{2,\Omega}.
\een
Applying these estimates to cells of meshes we obtain analogous
interpolation estimates for mesh interpolation of scalar functions
with explicit constants depending on character diameter of cells.

For the interpolation of vector valued functions we use (\ref{3.15}) and generalise this idea. 
\subsection{Cells with plane faces}
Define   the operator 
$$
\INT_ {\Gamma_1,\Gamma_2,...,\Gamma_d}: H^1(\Omega,\Rd)\,\rightarrow\, P^0(\Omega,\Rd)
$$ 
that  performs zero order interpolation of a vector valued function $\bfv$ using mean values of normal components on the faces $\Gamma_i$, 
$i=1,2,...,d$. In this case, we set
\ben
\label{4.4}
\int\limits_{\Gamma_i}
\left(\INT_{\Gamma_1,\Gamma_2,...,\Gamma_d}\bfv\right)\cdot\bfn^{(i)}\,d\Gamma=
\int\limits_{\Gamma_i}\bfv\cdot\bfn^{(i)}\,d\Gamma \qquad
i=1,2,...,d.
\een
This condition means that {\em the intrpolant must
preserve integral values of normal flux through $d$ selected
faces}.
In general  we may define several different operators  associated with different collections of faces.
However, once the set of $\Gamma_1,\Gamma_2,...,\Gamma_d$ satisfying (\ref{int1}) has been defined, the operator 
$\INT_ {\Gamma_1,\Gamma_2,...,\Gamma_d}$ uniquely defines the vector
$\INT_ {\Gamma_1,\Gamma_2,...,\Gamma_d}\,\bfv$.
In view of  (\ref{4.4}) and the identity 
$$
\left(\INT_{\Gamma_1,\Gamma_2,...,\Gamma_d}\bfv\right)\cdot\bfn^{(i)}
=(\INT_{\Gamma_1,\Gamma_2,...,\Gamma_d}\bfv)_j\bfe_j\cdot\bfn^{(i)},
$$
we conclude that the components
of the interpolant are uniquely defined by  the system
\ben
\label{4.5}
\sum\limits^{d}_{j=1}
n^{(i)}_j( \INT_ {\Gamma_1,\Gamma_2,...,\Gamma_d}\,\bfv)_j =
\frac{1}{|\Gamma_i|}\int\limits_{\Gamma_i}\bfv\cdot\bfn^{(i)}\,d\Gamma\quad i=1,2,...,d.
\een
Define $\bfw:=\bfv-\INT_{\Gamma_1,\Gamma_2,...,\Gamma_s} \bfv$.
From (\ref{4.4}), it follows that 
\be
\mean{\bfw\cdot\bfn^{(i)}}_{\Gamma_i}=0\qquad i=1,2,...,d.
\ee
Therefore, we can apply Theorem \ref{InqVector} to $\bfw$
and find that
\ben
\label{4.6}
\|\bfw\|_\Omega\,\leq\, {\mathds C}(\Omega,\Gamma_1,...,\Gamma_d)\|\nabla \bfw\|_\Omega.
\een
Since $\nabla \bfw=\nabla \bf v$, (\ref{4.6})
yields the estimate
\ben
\label{4.7}
%\label{intest}
\Frame{
\|\bfv-\INT_{\Gamma_1,\Gamma_2,...,\Gamma_d} \bfv\|_\Omega\,\leq\, {\mathds C}(\Omega,\Gamma_1,...,\Gamma_d)\|\nabla \bfv\|_\Omega,}
\een
where  ${\mathds C}(\Omega,\Gamma_1,...,\Gamma_d)$
depends on the constants $C_{\Gamma_i}$ (see section \ref{subsec:constvector}).
%%%%%%%%%%%%%%%%%%%%%%%%%%%%%%%%%%%
\subsection{ Cells with curvilinear faces}
\label{subsec:curvilinear}

Let $\Omega$ be a Lipschitz domain
with a piecewise smooth boundary consisting of smooth parts
$\Gamma_1$, $\Gamma_2$,...,$\Gamma_N$ 
(see Fig. \ref{fig:curvilinearcell}). In order to avoid
complicated topological structures (which may lead to difficulties with definitions of "mean normals"), we assume that
all the faces are such that normal vectors can be defined
at almost all points and impose an additional
condition
\be
\bfn_i(x^{(1)})\cdot\bfn_i(x^{(2)})>0\qquad
\forall x^{(1)},x^{(2)}\in \Gamma_i,\quad i=1,2,...,d.
\ee
Then, we can define the mean normal vector associated with $\Gamma_i$: 
$$
\wh\bfn{(i)}:=
\left\{
\frac{1}{|\Gamma_i|}\int\limits_{\Gamma_i}n^{(i)}_1\,d\Gamma,
\frac{1}{|\Gamma_i|}\int\limits_{\Gamma_i}n^{(i)}_2\,d\Gamma,...,
\frac{1}{|\Gamma_i|}\int\limits_{\Gamma_i}n^{(i)}_d \,d\Gamma
\right\}.
$$
\begin{figure}[h]
\label{fig:curvilinearcell}
\centerline{
\includegraphics[width=1.38in]{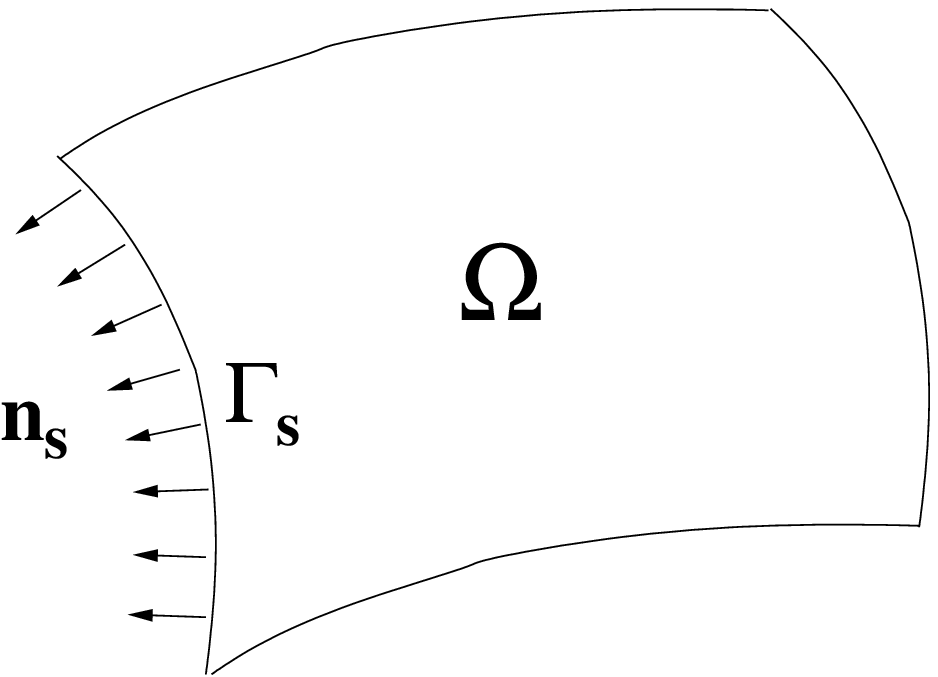}
\qquad\quad\quad\includegraphics[width=1.38in]{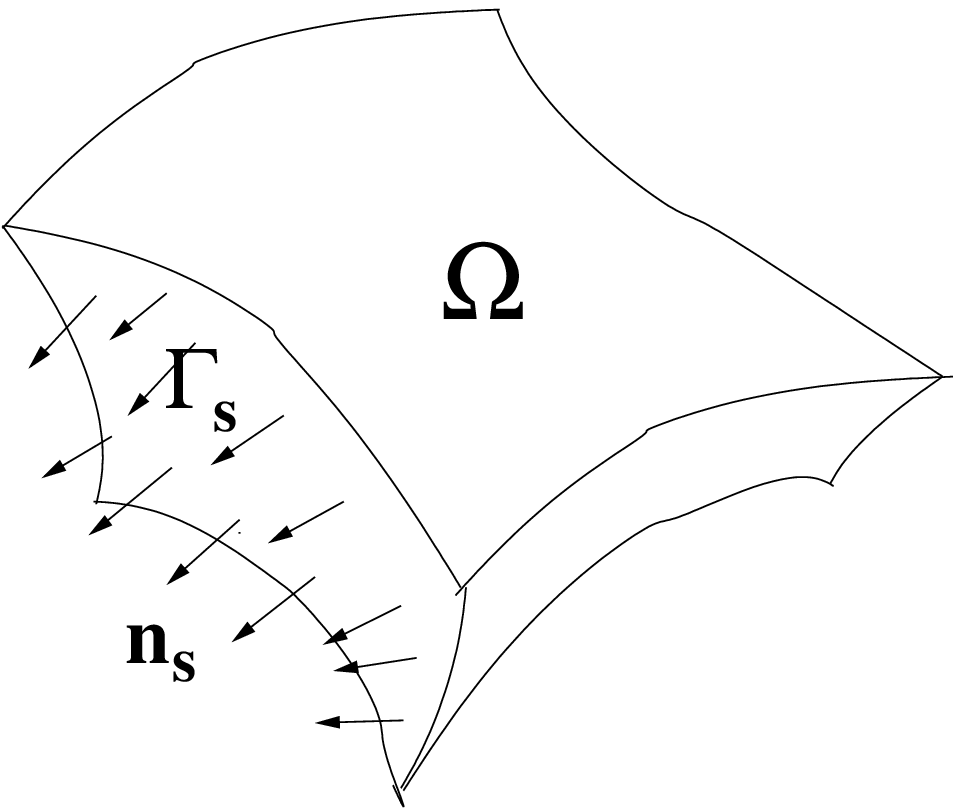}}
\caption{Cells with curvilinear faces in 2D and 3D}
\end{figure}

It is not difficult to verify that Theorem \ref{InqVector} holds
if $\bfN$ is replaced by $\wh\bfN$ formed by mean normal vectors, i.e.,
\ben
\label{4.8}
&&\;\det{\wh\bfN}\not=0,\qquad{\rm where}\qquad 
\wh n^{(i)}_j:=\wh\bfn^{(i)}\cdot \bfe_j,
\een
 and  (\ref{int2}) is replaced by the condition
\ben
\label{4.9}
\mean{\bfv\cdot\wh\bfn^{(i)}}_{\Gamma_i} =0\quad i=1,2,...,d.
\een
In other words, for cells with curvilinear faces the necessary interpolation condition reads as follows:
{\em mean values of normal vectors 
averaged on faces must
form a linearly independent system satisfying (\ref{4.8}).}

The operator
$\INT_{\Gamma_1,\Gamma_2,...,\Gamma_d}\, \bfv$ 
is defined by modifying  the condition (\ref{4.4}). Since
\be
\int\limits_{\Gamma_i}\INT_{\Gamma_1,\Gamma_2,...,\Gamma_d}\bfv\cdot\bfn^{(i)}\,d\Gamma=
\INT_{\Gamma_1,\Gamma_2,...,\Gamma_d}\bfv\cdot\wh\bfn^{(i)}\,\bigl|\Gamma_i\bigr|,
\ee
the interpolant $\INT_{\Gamma_1,\Gamma_2,...,\Gamma_d}\,\bfv$ is defined by the system
\ben
\label{4.9}
&&\sum\limits^{d}_{j=1}
\wh n^{(i)}_j
( \INT_ {\Gamma_1,\Gamma_2,...,\Gamma_d}\,\bfv)_j =
\frac{1}{|\Gamma_i|}\int\limits_{\Gamma_i}\bfv\cdot\bfn^{(i)}\,d\Gamma\quad i=1,2,...,d.
\een

By repeating the same arguments, we obtain the estimate
(\ref{4.7}) for the interpolant 
$\INT_{\Gamma_1,\Gamma_2,...,\Gamma_d}\bfv$.

%%%%%%%%%%%%%%%%%%%%%%%%%%%%%%%%%%%%%%%%%%%%%%%%%%
\subsection{Comparison of interpolation constants for $\INT_\Omega$
and $\INT_\Gamma$}
\subsubsection
{Triangles}
First, we compare five different interpolation operators for the right
triangle with equal legs (see Fig. 1).
For the interpolation operator $\INT_\Omega$ (Fig. 1a)
we have (\ref{1.9}), where (\ref{1.6}) yields the upper
bound of the respective interpolation constant $C_{\rm P}(\Omega)\,\leq\,\sqrt{2}\frac{h}{\pi}\approx 0.4502 h$.
\begin{figure}[h]
\label{trianglecell3}
\centerline{
\includegraphics[width=3in]{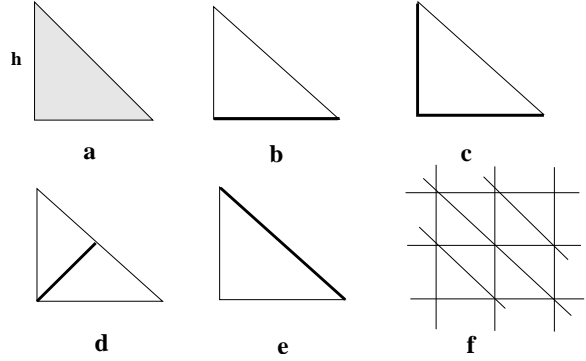}\qquad
}
\caption{Triangular cells}
\end{figure}

Four different operators $\INT_\Gamma$ are generated by setting
zero mean values on one leg (b), two legs (c), median (d),
and hypothenuse (e)
\be
\|w-{\mathbb I}_\Gamma(w)\|_{2,\Omega}\,\leq\;
C_\Gamma(\Omega) h\|\nabla w\|_{2,\Omega}.
\ee
The respective constants follow from Tab. 1.
For (b), $C_\Gamma(\Omega)=\frac{h}{\zeta}\approx  0.4929 h$, for (c)
$C_\Gamma(\Omega)=\frac{h}{\pi}
\approx 0.3183 h$, for (d) and (e)
$C_\Gamma(\Omega)=\frac{h}{\zeta\sqrt{2}}\approx 0.3485 h$.

%In the case (d), we have
%\be
%\|w-{\mathbb I}_\Gamma(w)\|^2_{2,\Omega}\leq \frac{1}{\zeta^2}\frac{h^2}{2}\|\nabla w\|^2_{2,\Omega}
%\ee

%In view of (\ref{1.14}), $C_{\rm P}(\Omega)\leq \frac{h}{\pi}$,
%what improves the upper bound, 
%$$
%C_{\rm P}(\Omega)\leq 0.2346 d_\Omega\approx 0.3318 h,
%$$
% which follows from
%(\ref{1.8}) (for $\alpha=\frac{\pi}{2}$).
%Hence, we conclude that $C_{\rm P}(\Omega)$ is between
%$\frac{h}{\sqrt{2}j_{0,1}}
% \approx 0.2940 h$ and $\frac{h}{\pi}\approx 0.3183 h$.
 
We can use these data and
compare the efficiency of $\INT_\Gamma$ and $\INT_\Omega$ for uniform meshes
which cells are right equilateral triangles (Fig. 1 f). For a mesh with $2nm$ cells, the operator $\INT_\Omega$ uses $2nm$ parameters (mean values on triangles) and provides interpolation with the constant $C_{\rm P}$. The operator $\INT_\Gamma$
using mean values on diagonals (see (e)) has almost the same constant but needs
only $mn$ parameters.

%%%%%%%%%%%%%%%%%%%%%%%%%%%%%%
\subsubsection{Squares} Similar results hold for square cells.
For the interpolation operator $\INT_\Omega$ (Fig. 2a) we have the exact
constant $C_{\rm P}=\frac{\pi}{h}$.
The constants for $\INT_\Gamma$ are as follows. For (b), 
$C_\Gamma=\frac{h}{\pi}$, for (c) and (d)
$C_\Gamma=\frac{2h}{\pi}$, and for (e)
$C_\Gamma=\frac{h}{2.869}$.
We see that for a uniform mesh with square cells $\INT_\Gamma$ and $\INT_\Omega$ have the same efficiency if $\Gamma$ is selected as on (d) or (e).
\begin{figure}[h]
\label{trianglecell5}
\centerline{
\includegraphics[width=3in]{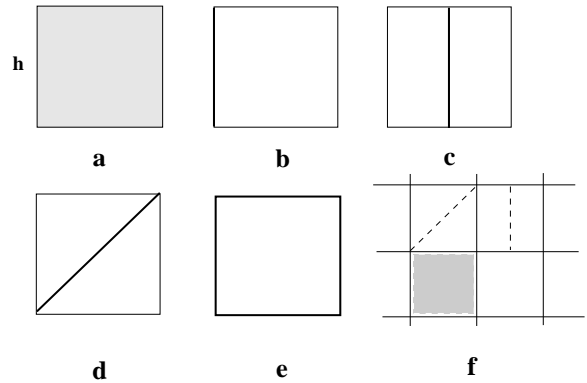}\;
}
\caption{Square cells}
\end{figure}
%%%%%%%%%%%%%%%%%%%%%%%%%%%%%%%%%%%%%%%%
\subsection{Interpolation on macrocells}
Advanced numerical approximations often operate
with macrocells. Let
 $\Omega$ be a macrocell consisting of $N$ simple subdomains 
 $\omega_i$
 (e.g., simplexes). Let  the boundary $\Gamma$
 consist of faces $\Gamma_i$ (each $\Gamma_i$ is a part
 of some subdomain boundary $\partial\omega_i$).
 For $w\in H^1(\Omega)$ we define $\INT_\Gamma w$ as a piecewise constant function that  satisfies the conditions
 \ben
 \label{4.10}
 \mean{w-\INT_\Gamma w}_{\Gamma_i}=0\qquad i=1,2,...,N.
 \een
  Then, we can apply interpolation operators $\INT_{\gamma_i}$ to any
  subdomain $\omega_i$ and find that for the whole cell
  \begin{multline}
  \label{4.11}
  \|w-\INT_{\Gamma}w\|^2_{2,\Omega}=\sum\limits^N_{i=1}
   \|w-\INT_{\Gamma}w\|^2_{2,\omega_i}\\
   \leq
   \sum\limits^N_{i=1}C^2_{\gamma_i}
   \|\nabla w\|^2_{2,\omega_i}
  \,\leq\, C^2_\Gamma\|\nabla w\|^2_{2,\Omega},
  \end{multline}
  where  
  $C_\Gamma=\max\limits_i\{C_{\Gamma_i}\}$.
  
 Estimates for vector valued functions are derived quite similarly. For example,
 let $d=2$ and $\Omega$ be a polygonal domain
 with $N$ faces.  If $N$ is an odd number, then
 we form out of $\Gamma_i$  a set of $K$ pairs $\{\Gamma^{(l)}_1,\Gamma^{(l)}_2\}$, $l=1,2,..., K$ such that the respective subdomains cover $\Omega$ and for each pair $\bfn^{(l)}_1$ and  $\bfn^{(l)}_2$ satisfy (\ref{int1}).
Then, the interpolant $\INT_\Gamma\bfv$  can be defined as a piece vise constant
field in each pair of subdomains $\omega^{(l)}_1\cup\omega^{(l)}_2$
that satisfies
   \ben
   \label{4.12}
 \mean{(\bfv-\INT_\Gamma\bfv)\cdot\bfn_i}_{\Gamma_i}=0\qquad i=1,2,...,N.
 \een
Analogously to (\ref{4.11}, we obtain
\ben
\label{4.13}
\|\bfv-\INT_\Gamma\bfv\|_{2,\Omega}\,\leq \,{\mathds C}
\|\nabla \bfv\|_{2,\Omega}\qquad \bfv\in H^1(\Omega,{\mathbb R}^{2}),
\een
where
${\mathds C}=\max\limits_{l=1,2,...,K}
{\mathds C}_{\Gamma^{(l)}_1,\Gamma^{(l)}_2}(\omega^{(l)}_1\cup\omega^{(l)}_2)$.

%%%%%%%%%%%%%%%%%%%%%%%%
\subsection{Interpolation on
meshes}
Finally, we shortly discuss applications to mesh interpolation.
It is clear that analogous operators $\INT_\Gamma$  can be constructed
for scalar and vector valued functions defined in a bounded Lipschitz
domain $\Omega$, which is covered by a
mesh ${\mathcal T}_h$ with sells $\Omega_i$, $i=1,2,...,M_h$.

Let $\Omega_i$ be Lipschitz domains such that
 $\Omega_i\cup\Omega_j=\emptyset$
if $i\not= j$ and
\ben
\label{5.1}
\overline{\Omega}=\bigcup^{M_h}_{i=1}\overline\Omega_i.
\een
We assume that $c_1 h\,\leq {\rm diam}\Omega_i\leq \,c_2 h$
for all $i=1,2,...M_h$,
where $c_2\geq c_1>0$ and $h$ is a small parameter.  The intersection of $\overline{\Omega}_i$
and  $\overline{\Omega}_j$ is either  empty or  a  face $\Gamma_{ij}$ (which is a Lipschitz domain in ${\mathbb R}^{d-1}$). 
By ${\mathcal E}_h$ we denote the collection of all faces in $\cT_h$.

It is easy to see that a function $w\in H^1(\cD)$ can be
interpolated by a piece vise constant function
on cells  of $\cT_h$ 
if we set
\ben
\label{5.2}
&&\INT_{\cT_h} (w)(x)=\INT_{\Gamma_i}w(x)=\mean{w}_{\Gamma_i}
\qquad {\rm if}\quad x\in\Omega_i.%\;i=1,2,...,N(\cT_h).
\een
Here $\Gamma_i$ is a face of $\Omega_i$ selected for the local interpolation
operator. Then,
\ben
\label{5.3}
\|w-\INT_{\cT_h} (w)\|_{2,\Omega}\;\leq\;C(\cT_h)\,\|\nabla w\|_{2,\Omega},
\een
where $C(\cT_h)$ is the maximal constant in inequalities (\ref{1.10}) associated with $\Omega_i$, $i=1,2,...,M_h$. We note that the amount of parameters used in such type interpolation
is essentially smaller than the amount of faces in $\cT_h$.

If $\INT_{\cT_h}$ is constructed by means of averaging on each face
$\Gamma_{ij}$ then (\ref{5.3}) holds with a better constant and the interpolant
$\INT_{\cT_h}w$ possesses an important property: {\em it preserves 
 mean values of $w$.}

Similar consideration is valid for vector valued functions. If we  define the interpolation
operator $\INT_{\cT_h} (\bfv)(x)$ on $\cT_h$ by the conditions
\ben
\label{5.4}
&&\INT_{\cT_h} \bfv\cdot\bfn_{ij}=
\mean{   \bfv\cdot\bfn_{{ij}}   }_{\Gamma_{ij}} 
\qquad \forall\; \Gamma_{ij}\in {\mathcal E}_h,
\een
then
\ben
\label{5.5}
\|\bfv-\INT_{\cT_h} \bfv\|_{2,\Omega}\;\leq\;{\mathds C}(\cT_h)\,
\|\nabla \bfv\|_{2,\Omega},
\een
where ${\mathds C}(\cT_h)$ is the maximal constant in inequalities (\ref{4.13})
used for  $\Omega_i$, $i=1,2,...,N(\cT_h)$.  The interpolant
$\INT_{\cT_h}\bfv$ possesses an important property: {\em it preserves 
mean values of $\bfv\cdot\bfn_{ij}$} on all the faces of $\cT_h$.
%%%%%%%%%%%%%%%

\end{document}